\newcommand{\Title}[1]{\bigskip\bigskip\centerline{\bf #1}\bigskip}
\newcommand{\Author}[1]{\medskip\centerline{ \it #1}}
\newcommand{\Affiliation}[1]{\medskip\centerline{#1}}
\newcommand{\Email}[1]{\medskip\centerline{#1}\bigskip}
\begin{document}

\newcommand{\N}{\mbox {$\mathbb N $}}
\newcommand{\Z}{\mbox {$\mathbb Z $}}
\newcommand{\Q}{\mbox {$\mathbb Q $}}
\newcommand{\R}{\mbox {$\mathbb R $}}
\newcommand{\lo }{\longrightarrow }
\newcommand{\ul}{\underleftarrow }
\newcommand{\rl}{\underrightarrow }
\newcommand{\rs }{\rightsquigarrow }
\newcommand{\ra }{\rightarrow } 
\newcommand{\dd }{\rightsquigarrow } 
\newcommand{\rars }{\Leftrightarrow }
\newcommand{\ol }{\overline }
\newcommand{\la }{\langle }
\newcommand{\tr }{\triangle }
\newcommand{\xr }{\xrightarrow }
\newcommand{\de }{\delta }
\newcommand{\pa }{\partial }
\newcommand{\LR }{\Longleftrightarrow }
\newcommand{\Ri }{\Rightarrow }
\newcommand{\va }{\varphi }
\newcommand{\Den}{{\rm Den}\,}
\newcommand{\Ker}{{\rm Ker}\,}
\newcommand{\Reg}{{\rm Reg}\,}
\newcommand{\Fix}{{\rm Fix}\,}
\newcommand{\Sup}{{\rm Sup}\,}
\newcommand{\Inf}{{\rm Inf}\,}
\newcommand{\Img}{{\rm Im}\,}
\newcommand{\Id}{{\rm Id}\,}
\newcommand{\ord}{{\rm ord}\,}

\newtheorem{theorem}{Theorem}[section]
\newtheorem{lemma}[theorem]{Lemma}
\newtheorem{proposition}[theorem]{Proposition}
\newtheorem{corollary}[theorem]{Corollary}
\newtheorem{definition}[theorem]{Definition}
\newtheorem{example}[theorem]{Example}
\newtheorem{examples}[theorem]{Examples}
\newtheorem{xca}[theorem]{Exercise}
\theoremstyle{remark}
\newtheorem{remark}[theorem]{Remark}
\newtheorem{remarks}[theorem]{Remarks}
\numberwithin{equation}{section}

\def\leftmark{L.C. Ciungu}

\Title{CLASSIFICATION OF STATES ON CERTAIN ORTHOMODULAR STRUCTURES} 
\title[Classification of states on certain orthomodular structures]{}
                                                                       
\Author{\textbf{LAVINIA CORINA CIUNGU}}
\Affiliation{Department of Mathematics} 
\Affiliation{St Francis College}
\Affiliation{179 Livingston Street, Brooklyn, NY 11201, USA}
\Email{lciungu@sfc.edu}

\begin{abstract} 
We define various type of states on implicative involutive BE algebras (Jauch-Piron state, (P)-state, (B)-state, 
subadditive state, valuation), and we investigate the relationships between these states. 
Moreover, we introduce the unital, full and rich sets of states, and we prove certain properties involving these notions. In the case when an implicative involutive BE algebra possesses a rich or a full set of states, we prove that it is an implicative-orthomodular lattice. If an implicative involutive BE algebra possesses a rich set of (P)-states or a full set of valuations, then it is an implicative-Boolean algebra. 
Additionally, based on their deductive systems, we give characterizations of implicative-orthomodular lattices and 
implicative-Boolean algebras. \\

\noindent
\textbf{Keywords:} {implicative involutive BE algebra, implicative-orthomodular lattice, implicative-Boolean algebra, 
state, valuation, deductive system} \\
\textbf{AMS classification (2020):} 06C15, 03G25, 06A06, 81P15
\end{abstract}

\maketitle

\section{Introduction} 

Orthomodular lattices were first introduced by G. Birkhoff and J. von Neumann (\cite{Birk1}), and, independently, by 
K. Husimi (\cite{Husimi}) by studying the structure of the lattice of projection operators on a Hilbert space.  
They introduced the orthomodular lattices as the event structure describing quantum mechanical experiments. 
Later, orthomodular lattices and orthomodular posets were considered as the standard quantum logic \cite{Varad1} (see also \cite{Beran, DvPu}). 
Complete studies on orthomodular lattices are presented in the monographs \cite{Birk2} and \cite{Beran}. 
An orthomodular lattice (OML for short) is an ortholattice $(X,\wedge,\vee,^{'},0,1)$ verifying: \\
$(OM)$ $(x\wedge y)\vee ((x\wedge y)^{'}\wedge x)=x$, for all $x,y\in X$, or, equivalently \\ 
$(OM^{'})$ $x\vee (x^{'}\wedge y)=y$, whenever $x\le y$ (where $x\le y$ iff $x=x\wedge y$) (\cite{PadRud}). \\
The orthomodular lattices were redefined and studied by A. Iorgulescu (\cite{Ior32}) based on involutive m-BE algebras, 
and two generalizations of orthomodular lattices were given: orthomodular softlattices and orthomodular widelattices. 
Based on implicative involutive BE algebras, we redefined in \cite{Ciu83} the orthomodular lattices, by introducing 
and studying the implicative-orthomodular lattices (IOML for short), and we gave certain characterizations of these structures. The distributivity of implicative-orthomodular lattices was investigated in \cite{Ciu84}. \\
In this paper, we study in the case of implicative-orthomodular lattices certain notions and results presented by 
S. Pulmannov\'a in \cite{Pulm1} for orthomodular lattices. 
We define various type of states on implicative involutive BE algebras (Jauch-Piron state, (P)-state, (B)-state, 
subadditive state, valuation), and we study the relationships between these states. 
Moreover, we introduce the unital, full and rich sets of states, and we investigate certain properties involving 
these notions. In the case when an implicative involutive BE algebra $X$ possesses a rich or a full set of states, 
we prove that $X$ is an implicative-orthomodular lattice. 
If $X$ possesses a rich set of (P)-states or a full set of valuations, then $X$ is an implicative-Boolean algebra. 
Additionally, we define the notions of deductive systems, order deductive systems, q-deductive systems, and 
p-deductive systems of implicative involutive BE algebras.  
Based on their deductive systems, we give characterizations of implicative-orthomodular lattices and 
implicative-Boolean algebras.

$\vspace*{1mm}$

\section{Preliminaries}

We recall some basic notions and results regarding the implicative BE algebras and implicative-orthomodular 
lattices that will be used in the paper. 

\emph{BE algebras} were introduced in \cite{Kim1} as algebras $(X,\ra,1)$ of type $(2,0)$ satisfying the 
following conditions, for all $x,y,z\in X$: 
$(BE_1)$ $x\ra x=1;$ 
$(BE_2)$ $x\ra 1=1;$ 
$(BE_3)$ $1\ra x=x;$ 
$(BE_4)$ $x\ra (y\ra z)=y\ra (x\ra z)$. 
A relation $\le$ is defined on $X$ by $x\le y$ iff $x\ra y=1$. 
A BE algebra $X$ is \emph{bounded} if there exists $0\in X$ such that $0\le x$, for all $x\in X$. 
In a bounded BE algebra $(X,\ra,0,1)$ we define $x^*=x\ra 0$, for all $x\in X$. 
A bounded BE algebra $X$ is called \emph{involutive} if $x^{**}=x$, for any $x\in X$. 

\begin{lemma} \label{qbe-10} $\rm($\cite{Ciu83}$\rm)$ 
Let $(X,\ra,1)$ be a BE algebra. The following hold for all $x,y,z\in X$: \\
$(1)$ $x\ra (y\ra x)=1;$ 
$(2)$ $x\le (x\ra y)\ra y$. \\
If $X$ is bounded, then: \\
$(3)$ $x\ra y^*=y\ra x^*;$ 
$(4)$ $x\le x^{**}$. \\
If $X$ is involutive, then: \\
$(5)$ $x^*\ra y=y^*\ra x;$ 
$(6)$ $x^*\ra y^*=y\ra x;$ 
$(7)$ $(x\ra y)^*\ra z=x\ra (y^*\ra z);$ \\
$(8)$ $x\ra (y\ra z)=(x\ra y^*)^*\ra z;$    
$(9)$ $(x^*\ra y)^*\ra (x^*\ra y)=(x^*\ra x)^*\ra (y^*\ra y)$.  
\end{lemma}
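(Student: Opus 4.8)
The plan is to derive all nine identities purely by manipulating the four BE-axioms, and to prove them in the stated order so that each later part may invoke the earlier ones. The two workhorses throughout are the exchange law $(BE_4)$ and the collapsing laws $(BE_1)$, $(BE_2)$, $(BE_3)$; in the bounded and involutive parts I add the abbreviation $x^*=x\ra 0$ and the law $x^{**}=x$.

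First I would dispatch the two unconditional identities. For $(1)$, apply $(BE_4)$ to rewrite $x\ra(y\ra x)$ as $y\ra(x\ra x)$, then use $(BE_1)$ and $(BE_2)$ to collapse it to $y\ra 1=1$. For $(2)$, I must check $x\ra((x\ra y)\ra y)=1$; by $(BE_4)$ this equals $(x\ra y)\ra(x\ra y)$, which is $1$ by $(BE_1)$. In the bounded case, $(3)$ is immediate: $x\ra y^*=x\ra(y\ra 0)=y\ra(x\ra 0)=y\ra x^*$ by $(BE_4)$. For $(4)$ I would simply specialize $(2)$ to $y=0$, giving $x\le(x\ra 0)\ra 0=x^{**}$.

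For the involutive identities I would exploit $z^{**}=z$ together with $(3)$. For $(5)$, write $y=(y^*)^*$ and apply $(3)$ to $x^*\ra(y^*)^*$, obtaining $y^*\ra(x^*)^*=y^*\ra x$; identity $(6)$ uses the same trick, $x^*\ra y^*=y\ra x^{**}=y\ra x$. Then $(7)$ follows by chaining $(5)$ and $(BE_4)$: $(x\ra y)^*\ra z=z^*\ra(x\ra y)=x\ra(z^*\ra y)=x\ra(y^*\ra z)$. Identity $(8)$ is parallel: $(x\ra y^*)^*\ra z=z^*\ra(x\ra y^*)=x\ra(z^*\ra y^*)=x\ra(y\ra z)$, using $(5)$, $(BE_4)$, and $(6)$.

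The only part that looks genuinely delicate is $(9)$, but it too reduces to a single normal form once $(7)$ is in hand. Applying $(7)$ to the left-hand side with $c=x^*\ra y$ and then $(BE_4)$ to the inner factor gives $(x^*\ra y)^*\ra(x^*\ra y)=x^*\ra(y^*\ra(x^*\ra y))=x^*\ra(x^*\ra(y^*\ra y))$; applying $(7)$ to the right-hand side with $c=y^*\ra y$ gives $(x^*\ra x)^*\ra(y^*\ra y)=x^*\ra(x^*\ra(y^*\ra y))$ as well, so the two sides coincide. I expect the main obstacle to be purely bookkeeping, namely keeping the variables correctly matched across the repeated applications of $(BE_4)$ and of $(5)$, $(7)$, rather than any conceptual difficulty; conceptually, once $(7)$ is available, $(9)$ is essentially forced.
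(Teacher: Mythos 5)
Your proof is correct: every step checks against the BE-axioms. Parts (1)--(4) are immediate from $(BE_1)$--$(BE_4)$ as you say; (5) and (6) follow from (3) plus involution; the chains for (7) and (8) via (5), $(BE_4)$, (6) are valid; and your reduction of both sides of (9) to the common normal form $x^*\ra (x^*\ra (y^*\ra y))$ is exactly right and is the cleanest way to see that identity. Note that the paper itself gives no proof of this lemma --- it is recalled from the earlier paper \cite{Ciu83} as a preliminary --- so there is no in-paper argument to compare against; your elementary equational derivation is the natural one and stands on its own.
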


\noindent
In a BE algebra $X$, we define the additional operation: \\
$\hspace*{3cm}$ $x\Cup y=(x\ra y)\ra y$. \\
If $X$ is involutive, we define the operation: \\
$\hspace*{3cm}$ $x\Cap y=((x^*\ra y^*)\ra y^*)^*$, \\
and the relations $\le_Q$ and $\le_L$ by: \\
$\hspace*{3cm}$ $x\le_Q y$ iff $x=x\Cap y$ and $x\le_L y$ iff $x=(x\ra y^*)^*$. \\
\noindent
We can see that $x\le_L y$ iff $x^*=y\ra x^*$. 

\begin{proposition} \label{qbe-20} $\rm($\cite{Ciu83}$\rm)$ Let $X$ be an involutive BE algebra. 
Then the following hold for all $x,y,z\in X$: \\
$(1)$ $x\le_Q y$ implies $x=y\Cap x$ and $y=x\Cup y;$ \\
$(2)$ $\le_Q$ is reflexive and antisymmetric; \\
$(3)$ $x\Cap y=(x^*\Cup y^*)^*$ and $x\Cup y=(x^*\Cap y^*)^*;$ \\ 
$(4)$ $x\le_Q y$ implies $x\le y;$ \\
$(5)$ $x, y\le_Q z$ and $z\ra x=z\ra y$ imply $x=y$ \emph{(cancellation law)}; \\
$(6)$ $\le_L$ implies $\le$; \\ 
$(7)$ $\le_L$ is an order relation on $X$. 
\end{proposition}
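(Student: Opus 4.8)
The plan is to settle the seven items in an order that lets each reuse its predecessors, beginning with the purely formal identity~(3). Unfolding the definition of $\Cup$, I have $x^*\Cup y^*=(x^*\ra y^*)\ra y^*$, so $(x^*\Cup y^*)^*$ is verbatim the definition of $x\Cap y$; replacing $x,y$ by $x^*,y^*$ in this identity and cancelling double stars via $z^{**}=z$ then produces the companion $x\Cup y=(x^*\Cap y^*)^*$. Before going on I would record from Lemma~\ref{qbe-10}(6) the two dictionary entries $x\ra y=y^*\ra x^*$ and $x^*\ra y^*=y\ra x$, which let me pass freely between the starred and unstarred descriptions of $\Cap$ and $\Cup$.

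The heart of parts~(1),~(2) and~(4) is a single observation. If $x\le_Q y$, then by~(3) $x^*=x^*\Cup y^*=(x^*\ra y^*)\ra y^*$, and substituting this into the law $u\ra(v\ra u)=1$ of Lemma~\ref{qbe-10}(1) (with $u=y^*$, $v=x^*\ra y^*$) yields $y^*\ra x^*=1$. Everything then collapses through $(BE_3)$: $y\Cap x=\big((y^*\ra x^*)\ra x^*\big)^*=(1\ra x^*)^*=x$ and $x\Cup y=(x\ra y)\ra y=(y^*\ra x^*)\ra y=1\ra y=y$, which is~(1); and since $x\ra y=y^*\ra x^*=1$ is just $x\le y$, this simultaneously gives~(4). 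For~(2), reflexivity is the direct evaluation $x\Cap x=\big((x^*\ra x^*)\ra x^*\big)^*=(1\ra x^*)^*=x$ using $(BE_1)$, $(BE_3)$ and involutivity, while antisymmetry needs no new computation: $x\le_Q y$ gives $x=y\Cap x$ by~(1), and $y\le_Q x$ means $y=y\Cap x$ by definition, so $x=y$.

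The cancellation law~(5) I would prove by writing the hypotheses out. From $x\le_Q z$ and~(3), applying Lemma~\ref{qbe-10}(6) gives $x^*=(x^*\ra z^*)\ra z^*=(z\ra x)\ra z^*$, and likewise $y^*=(z\ra y)\ra z^*$; the assumption $z\ra x=z\ra y$ makes the two right-hand sides equal, so $x^*=y^*$ and hence $x=y$. For~(6), I would substitute the defining equality $x=(x\ra y^*)^*$ into $x\ra y$ and invoke Lemma~\ref{qbe-10}(7): $x\ra y=(x\ra y^*)^*\ra y=x\ra(y^{**}\ra y)=x\ra(y\ra y)=x\ra 1=1$, using involutivity together with $(BE_1)$ and $(BE_2)$; thus $x\le y$.

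Part~(7) is where the care concentrates. From the definition, $x\le_L y\iff x\ra y^*=x^*$, and Lemma~\ref{qbe-10}(3) rewrites this as $y\ra x^*=x^*$. Antisymmetry is then quick: $x\le_L y$ gives $y\ra x^*=x^*$ while $y\le_L x$ gives $y\ra x^*=y^*$ directly, whence $x^*=y^*$ and $x=y$. For transitivity I would compute $x\ra(y\ra z^*)$ in two ways: because $y\ra z^*=y^*$ (from $y\le_L z$) it equals $x\ra y^*=x^*$ (from $x\le_L y$), whereas Lemma~\ref{qbe-10}(8) gives $x\ra(y\ra z^*)=(x\ra y^*)^*\ra z^*=x\ra z^*$ (using $x\ra y^*=x^*$ and involutivity); equating the two yields $x\ra z^*=x^*$, that is $x\le_L z$. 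The delicate point, which I expect to be the genuine obstacle, is reflexivity: $x\le_L x$ is precisely the identity $x\ra x^*=x^*$, and although $x^*\le x\ra x^*$ is immediate from Lemma~\ref{qbe-10}(1), the reverse inequality is where the specific arithmetic of these involutive structures has to enter rather than the generic BE-manipulations, so this is the step that really anchors the whole proposition.
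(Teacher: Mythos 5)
Your items $(1)$--$(6)$, together with the antisymmetry and transitivity parts of $(7)$, are all correct: deriving $(1)$, $(2)$, $(4)$ from the single identity $y^*\ra x^*=1$, the cancellation law $(5)$ via Lemma \ref{qbe-10}$(6)$, item $(6)$ via Lemma \ref{qbe-10}$(7)$, and transitivity of $\le_L$ via Lemma \ref{qbe-10}$(3)$ and $(8)$ are exactly the computations this proposition requires, and each step checks out against the BE axioms. Note that the paper itself offers no argument to compare against --- the proposition is quoted from \cite{Ciu83} without proof --- so your proposal has to stand on its own.

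It does not stand, and the failure is precisely at the point you flagged and then left open: reflexivity of $\le_L$ in part $(7)$. Unfolding the definition, $x\le_L x$ says $x=(x\ra x^*)^*$, i.e. $x\ra x^*=x^*$, which is verbatim condition $(iG)$ of Lemma \ref{iol-30}. But $(iG)$ is a consequence of implicativity; it is not a theorem of involutive BE algebras, so no further ``generic BE-manipulation'' can close the gap at the stated level of generality. Concretely, the standard {\L}ukasiewicz algebra on $[0,1]$, with $x\ra y=\min(1,1-x+y)$ and $x^*=1-x$, satisfies $(BE_1)$--$(BE_4)$, boundedness and involutivity, yet there $x\le_L y$ iff $x=\max(0,x+y-1)$, and taking $x=y=\tfrac{1}{2}$ gives $\max(0,0)=0\ne\tfrac{1}{2}$: the relation $\le_L$ is not reflexive. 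So part $(7)$, read literally for arbitrary involutive BE algebras, is false, and any correct proof must invoke an extra hypothesis; presumably the source statement carries one. If the proposition is read in the implicative setting in which this paper actually uses it (as in Lemma \ref{iol-30-20} and everything thereafter), your proof is completed in one line: Lemma \ref{iol-30} gives $(iG)$, that is $x\ra x^*=x^*$, which is exactly $x\le_L x$. As written, however, your proposal has a genuine gap, and one that cannot be filled under the hypotheses you (and the statement) assume.
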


\noindent 
A \emph{quantum-Wajsberg algebra} (\emph{QW algebra, for short}) (\cite{Ciu83}) $(X,\ra,^*,1)$ is an 
involutive BE algebra $(X,\ra,^*,1)$ satisfying the following condition: for all $x,y,z\in X$, \\
(QW) $x\ra ((x\Cap y)\Cap (z\Cap x))=(x\ra y)\Cap (x\ra z)$. \\
It was proved in \cite{Ciu83} that condition (QW) is equivalent to the following conditions: \\
$(QW_1)$ $x\ra (x\Cap y)=x\ra y;$ \\ 
$(QW_2)$ $x\ra (y\Cap (z\Cap x))=(x\ra y)\Cap (x\ra z)$. \\
An involutive-BE algebra satisfying condition $(QW_2)$ is called an \emph{implicative-orthomolular algebra}. 
A BE algebra $X$ is called \emph{implicative} if it satisfies the following condition, for all $x,y\in X$: \\
$(Impl)$ $(x\ra y)\ra x=x$. \\
 
\begin{lemma} \label{iol-30} \emph{(\cite{Ciu83})} Let $(X,\ra,^*,1)$ be an implicative involutive BE algebra. 
Then $X$ verifies the following axioms: for all $x,y\in X$, \\
$(iG)$       $x^*\ra x=x$, or equivalently, $x\ra x^*=x^*;$ \\
$(Iabs$-$i)$ $(x\ra (x\ra y))\ra x=x;$ \\
$(Pimpl)$    $x\ra (x\ra y)=x\ra y$.    
\end{lemma}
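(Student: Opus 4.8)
The plan is to derive the three axioms in sequence, each built on the previous one, the only genuine input being the implicative law $(Impl)$ together with the involutive identities already collected in Lemma~\ref{qbe-10}. First I would establish $(iG)$. Since $X$ is involutive it is in particular bounded, so $0\in X$ and $x^*=x\ra 0$. Substituting $y=0$ into $(Impl)$ gives $(x\ra 0)\ra x=x$, that is, $x^*\ra x=x$. To obtain the equivalent form, I would replace $x$ by $x^*$ in this identity to get $(x^*)^*\ra x^*=x^*$, and involutivity ($x^{**}=x$) collapses the left-hand side to $x\ra x^*$, yielding $x\ra x^*=x^*$.

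Next I would prove $(Pimpl)$ using part~$(8)$ of Lemma~\ref{qbe-10}, namely $x\ra(y\ra z)=(x\ra y^*)^*\ra z$. Setting $y=x$ and $z=y$ there produces $x\ra(x\ra y)=(x\ra x^*)^*\ra y$. By the second form of $(iG)$ we have $x\ra x^*=x^*$, hence $(x\ra x^*)^*=x^{**}=x$, and the right-hand side becomes $x\ra y$; thus $x\ra(x\ra y)=x\ra y$. Finally, $(Iabs$-$i)$ follows at once: rewriting the inner expression by $(Pimpl)$ gives $(x\ra(x\ra y))\ra x=(x\ra y)\ra x$, and one more application of $(Impl)$ reduces this to $x$.

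I expect the only step requiring a moment's thought to be the substitution into part~$(8)$ of Lemma~\ref{qbe-10}: choosing $y=x$, $z=y$ so that the factor $x\ra x^*$ appears, which is precisely the quantity controlled by $(iG)$. Beyond spotting that substitution the argument is a short chain of rewrites, so I anticipate no serious obstacle.
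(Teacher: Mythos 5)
Your proof is correct: every step checks out. Setting $y:=0$ in $(Impl)$ does give $x^*\ra x=x$, and replacing $x$ by $x^*$ plus involutivity gives $x\ra x^*=x^*$; the substitution $y:=x$, $z:=y$ in Lemma \ref{qbe-10}$(8)$ together with $(x\ra x^*)^*=x^{**}=x$ yields $(Pimpl)$; and $(Iabs$-$i)$ then follows from $(Pimpl)$ and one more application of $(Impl)$. Note, however, that there is no in-paper proof to compare against: the paper states Lemma \ref{iol-30} without proof, citing \cite{Ciu83}, so it is recalled background rather than a result established here. Your argument is a clean, self-contained derivation from exactly the ingredients the paper does make available (the definition of $(Impl)$ and Lemma \ref{qbe-10}), and it is consistent with the paper's Lemma \ref{iol-30-10}, which asserts that for involutive BE algebras implicativity is equivalent to $(iG)+(Iabs$-$i)$ and to $(Pimpl)+(Iabs$-$i)$; your proof supplies the forward direction of both equivalences.
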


\begin{lemma} \label{iol-30-10} \emph{(\cite{Ciu83})}
Let $(X,\ra,^*,1)$ be an involutive BE algebra. The following are equivalent: \\
$(a)$ $X$ is implicative; \\
$(b)$ $X$ verifies axioms $(iG)$ and $(Iabs$-$i)$; \\ 
$(c)$ $X$ verifies axioms $(Pimpl)$ and $(Iabs$-$i)$. 
\end{lemma}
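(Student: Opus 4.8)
The plan is to establish all three equivalences through the cyclic chain $(a)\Ri(b)\Ri(c)\Ri(a)$. The implication $(a)\Ri(b)$ requires nothing new: Lemma \ref{iol-30} already records that every implicative involutive BE algebra satisfies $(iG)$ and $(Iabs$-$i)$ (indeed $(Pimpl)$ as well), so I would simply invoke it. The remaining two implications carry the actual content, and both hinge on isolating a single auxiliary fact: in any involutive BE algebra, $(iG)$ by itself already forces $(Pimpl)$.

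To prove that auxiliary fact, I would feed the term $x\ra(x\ra y)$ through identity $(8)$ of Lemma \ref{qbe-10}, namely $x\ra(y\ra z)=(x\ra y^*)^*\ra z$, specialized to $y:=x$ and $z:=y$. This rewrites $x\ra(x\ra y)$ as $(x\ra x^*)^*\ra y$. Now $(iG)$ in its equivalent form $x\ra x^*=x^*$ collapses the inner expression, and involutivity gives $(x\ra x^*)^*=x^{**}=x$; hence $x\ra(x\ra y)=x\ra y$, which is exactly $(Pimpl)$. This settles $(b)\Ri(c)$, since $(Iabs$-$i)$ is carried along unchanged while $(Pimpl)$ becomes available.

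For $(c)\Ri(a)$ the computation is immediate: assuming $(Pimpl)$ and $(Iabs$-$i)$, I substitute $x\ra(x\ra y)=x\ra y$ into $(Iabs$-$i)$ to obtain $(x\ra y)\ra x=(x\ra(x\ra y))\ra x=x$, which is precisely $(Impl)$, i.e. $(a)$. Closing the cycle $(a)\Ri(b)\Ri(c)\Ri(a)$ then yields the full equivalence.

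The only genuinely non-routine point is recognizing which of the nine identities in Lemma \ref{qbe-10} linearizes $x\ra(x\ra y)$ into a shape on which $(iG)$ can act; once identity $(8)$ is selected, the derivation of $(Pimpl)$ from $(iG)$ is a two-line manipulation and everything else is mere substitution. I therefore anticipate no real obstacle beyond bookkeeping, and would verify only that each application of $(8)$ and of $(iG)$ respects the standing hypothesis that $X$ is involutive, which holds throughout the statement.
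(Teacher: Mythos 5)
Your proof is correct, and note that the paper itself states Lemma \ref{iol-30-10} without proof (it is recalled from \cite{Ciu83}), so there is no in-paper argument to compare against. Judged on its own terms, your cyclic scheme $(a)\Ri (b)\Ri (c)\Ri (a)$ is sound: $(a)\Ri (b)$ is immediate from Lemma \ref{iol-30}; your central observation that $(iG)$ alone forces $(Pimpl)$ in any involutive BE algebra is a correct use of Lemma \ref{qbe-10}$(8)$ with $y:=x$, $z:=y$, since $x\ra (x\ra y)=(x\ra x^*)^*\ra y=x^{**}\ra y=x\ra y$ by $(iG)$ (in the form $x\ra x^*=x^*$) and involutivity; and $(c)\Ri (a)$ follows by substituting $(Pimpl)$ into $(Iabs$-$i)$ to get $(x\ra y)\ra x=(x\ra (x\ra y))\ra x=x$, which is exactly $(Impl)$. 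One structural remark: the step $(a)\Ri (b)$ leans on Lemma \ref{iol-30}, which in this paper is likewise an unproved citation from \cite{Ciu83}; that is legitimate within the paper's framework, but it means the genuinely self-contained content of your argument is the chain $(b)\Ri (c)\Ri (a)$ --- which is where the content should be, since those are the directions that recover implicativity from the axiom fragments.
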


\begin{lemma} \label{iol-30-20} Let $X$ be an implicative involutive BE algebra. The following hold, 
for all $x,y\in X$: \\ 
$(1)$ $x\le_L y$ iff $y^*\le_L x^*;$ \\
$(2)$ $\le_Q$ implies $\le_L;$ \\
$(3)$ $x\le_L y\ra x$ and $x^*\le_L x\ra y$.   
\end{lemma}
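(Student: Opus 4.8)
The plan is to prove the three statements of Lemma~\ref{iol-30-20} in order, relying chiefly on the characterization $x\le_L y$ iff $x^*=y\ra x^*$ recorded just before Proposition~\ref{qbe-20}, together with the involutivity $x^{**}=x$ and the identities of Lemma~\ref{qbe-10}. For part $(1)$, I would start from the definition $x\le_L y$ iff $x^*=y\ra x^*$ and try to massage the right-hand side into the form that defines $y^*\le_L x^*$, namely $(y^*)^*=x^*\ra (y^*)^*$, i.e.\ $y=x^*\ra y$. The bridge here should be Lemma~\ref{qbe-10}$(5)$, $x^*\ra y=y^*\ra x$, or $(6)$, $x^*\ra y^*=y\ra x$: applying involutivity to turn $y\ra x^*$ into $x\ra y^*$ (via $(3)$) and then using $(6)$ should let me pass between the two conditions symmetrically, so that each implies the other.

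For part $(2)$, I want to show $x\le_Q y$ implies $x\le_L y$. The most economical route is to invoke Proposition~\ref{qbe-20}$(1)$, which gives $y=x\Cup y=(x\ra y)\ra y$ from $x\le_Q y$, and then compute $(x\ra y^*)^*$ and check it equals $x$. Expanding the definition $x\Cap y=((x^*\ra y^*)\ra y^*)^*$ and comparing with the $\le_L$ condition $x=(x\ra y^*)^*$, I expect the hypothesis $x=x\Cap y$ to rewrite directly into $x=(x\ra y^*)^*$ after one application of involutivity and perhaps Lemma~\ref{qbe-10}$(6)$ to swap $x^*\ra y^*$ with $y\ra x$. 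Alternatively I could argue through the dual form $x^*=y\ra x^*$ and use that $x\le_Q y$ forces the relevant $\Cap$ to collapse.

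For part $(3)$, I need $x\le_L y\ra x$ and $x^*\le_L x\ra y$. Using the criterion, $x\le_L y\ra x$ amounts to $x^*=(y\ra x)\ra x^*$; by Lemma~\ref{qbe-10}$(3)$ the right-hand side equals $x\ra (y\ra x)^*$, and Lemma~\ref{qbe-10}$(1)$ gives $x\ra(y\ra x)=1$, which after an application of $(iG)$ or $(Pimpl)$ from Lemma~\ref{iol-30} I expect to reduce to $x^*$. The second inequality $x^*\le_L x\ra y$ I would handle either symmetrically, by replacing $x$ with $x^*$ and $y$ with $y^*$ and invoking part $(1)$, or directly via the criterion $x=(x\ra y)\ra x$, which is exactly axiom $(Impl)$; this is the cleanest observation, since $(x\ra y)\ra x=x$ is the defining implicative identity.

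The main obstacle I anticipate is part $(2)$: the relations $\le_Q$ and $\le_L$ are defined through the two different derived operations $\Cap$ and the $(\cdot\ra\cdot^*)^*$ construction, so bridging them cleanly requires carefully tracking where involutivity and the identities $(5)$--$(6)$ of Lemma~\ref{qbe-10} are applied, and making sure no hidden use of commutativity of $\Cap$ (which need not hold in a general involutive BE algebra) slips in. Parts $(1)$ and $(3)$ should be short once the defining criterion and the implicative axiom are in hand.
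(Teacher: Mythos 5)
There is a genuine gap in your plans for parts $(1)$ and $(2)$: implicativity never enters them, and it must. The tools you propose there --- involutivity and the contraposition identities of Lemma \ref{qbe-10}$(3)$,$(5)$,$(6)$ --- are valid in \emph{every} involutive BE algebra, but statements $(1)$ and $(2)$ are false at that level of generality, so no rewriting with those identities alone can succeed. Concretely, take $X=\{0,a,b,1\}$ with $0\ra z=1$ and $1\ra z=z$ for all $z$, $a\ra 0=b$, $a\ra a=a\ra b=1$, $b\ra 0=a$, $b\ra a=a$, $b\ra b=1$. One checks this is a bounded involutive BE algebra; it is not implicative, since $(b\ra 0)\ra b=a\ra b=1\ne b$. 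In it, $b\le_L b$ yet $b^{*}\nleqslant_L b^{*}$ (because $b^{*}=a$ and $(a\ra a^{*})^{*}=(a\ra b)^{*}=0\ne a$), so $(1)$ fails; and $a\le_Q a$ (as $\le_Q$ is always reflexive) while $a\nleqslant_L a$, so $(2)$ fails. The structural reason your plan cannot close the gap is visible in the terms: $x\le_L y$ is a statement about the term $x\ra y^{*}=y\ra x^{*}$, whereas $y^{*}\le_L x^{*}$ is a statement about the genuinely different term $x^{*}\ra y=y^{*}\ra x$; identities $(3)$,$(5)$,$(6)$ permute each family internally but never convert one into the other. The missing bridge is $(Impl)$ itself: if $x^{*}=y\ra x^{*}$, then $x^{*}\ra y=(y\ra x^{*})\ra y=y$ by $(Impl)$, which via Lemma \ref{qbe-10}$(5)$ is exactly $y^{*}\le_L x^{*}$, and the converse is symmetric. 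Likewise for $(2)$: from $x\le_Q y$ one gets $x^{*}=(x^{*}\ra y^{*})\ra y^{*}=(y\ra x)\ra y^{*}=y\ra (y\ra x)^{*}$ by Lemma \ref{qbe-10}$(6)$,$(3)$, and then $(Pimpl)$ of Lemma \ref{iol-30} gives $y\ra x^{*}=y\ra (y\ra (y\ra x)^{*})=y\ra (y\ra x)^{*}=x^{*}$, i.e.\ $x\le_L y$; your hoped-for ``direct rewrite'' founders precisely on the mismatch between $(y\ra x)^{*}$ and $x^{*}$. (For comparison: the paper does not reprove $(1)$ and $(2)$ at all; it cites Lemma 3.4 of \cite{Ciu84} and proves only $(3)$.)

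Part $(3)$ is half right. Your observation that $x^{*}\le_L x\ra y$ unfolds, through the criterion $a\le_L b$ iff $a^{*}=b\ra a^{*}$, to $(x\ra y)\ra x=x$, i.e.\ to $(Impl)$ verbatim, is correct and coincides with the paper's argument. But your route to $x\le_L y\ra x$ through $x\ra (y\ra x)=1$ is a dead end: to get from there to $x\ra (y\ra x)^{*}=x^{*}$ you would need the principle ``$x\le z$ implies $z\ra x^{*}=x^{*}$'', i.e.\ that $\le$ implies $\le_L$, and this fails even in implicative-orthomodular lattices (under the translation to orthomodular lattices it would say $x\wedge y^{'}=0$ forces $x\le y$, which already fails in the lantern $MO2$). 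The repair is short and is what the paper does: by Lemma \ref{qbe-10}$(6)$, $(y\ra x)\ra x^{*}=(x^{*}\ra y^{*})\ra x^{*}$, which equals $x^{*}$ by $(Impl)$. Equivalently, once $x^{*}\le_L x\ra y$ is established for all $x,y$, substitute $x\mapsto x^{*}$, $y\mapsto y^{*}$ and use $x^{*}\ra y^{*}=y\ra x$; that substitution needs no appeal to your (broken) part $(1)$.
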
 
\begin{proof}
$(1)$,$(2)$ See \cite[Lemma 3.4]{Ciu84}. \\
$(3)$ Applying $(Impl)$, we have $(x\ra (y\ra x)^*)^*=((y\ra x)\ra x^*)^*=((x^*\ra y^*)\ra x^*)^*=x$, 
hence $x\le_L y\ra x$. Similarly, since $(x^*\ra (x\ra y)^*)^*=((x\ra y)\ra x)^*=x^*$, we get $x^*\le_L x\ra y$.  
\end{proof}

\begin{lemma} \label{ioml-30} \emph{(\cite{Ciu83})} Let $X$ be an involutive BE algebra. 
The following are equivalent for all $x,y\in X$: \\
$(IOM)$ $x\Cap (y\ra x)=x;$ \\
$(IOM^{'})$ $x\Cap (x^*\ra y)=x;$ $\hspace*{5cm}$ \\
$(IOM^{''})$ $x\Cup (x\ra y)^*=x$. 
\end{lemma}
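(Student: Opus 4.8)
The plan is to avoid any case-by-case computation and instead move between the three conditions by changes of variable through the involution $^*$, together with the duality between $\Cap$ and $\Cup$. Since each of $(IOM)$, $(IOM^{'})$, $(IOM^{''})$ is understood as a statement quantified over all $x,y\in X$, and since $x\mapsto x^*$ is an involutive bijection of $X$, it suffices to transform each universally quantified identity into the next.

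First I would prove $(IOM)\Leftrightarrow (IOM^{'})$. Assuming $(IOM)$ holds for all $x,y$, fix $x,y$ and apply it with $y$ replaced by $y^*$ to get $x\Cap (y^*\ra x)=x$; by Lemma \ref{qbe-10}$(5)$ we have $y^*\ra x=x^*\ra y$, so this is exactly $(IOM^{'})$. Conversely, assuming $(IOM^{'})$ and applying it at $(x,y^*)$ gives $x\Cap (x^*\ra y^*)=x$, and Lemma \ref{qbe-10}$(6)$ turns $x^*\ra y^*$ into $y\ra x$, recovering $(IOM)$.

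Next I would prove $(IOM^{'})\Leftrightarrow (IOM^{''})$. Using the De Morgan law of Proposition \ref{qbe-20}$(3)$ and involutivity $(x\ra y)^{**}=x\ra y$, I rewrite $x\Cup (x\ra y)^*=(x^*\Cap (x\ra y))^*$, so that $(IOM^{''})$ is equivalent to $x^*\Cap (x\ra y)=x^*$. On the other hand, evaluating $(IOM^{'})$ at $(x^*,y)$ and using $x^{**}=x$ yields precisely $x^*\Cap (x\ra y)=x^*$. Because $^*$ is a bijection, the universally quantified $(IOM^{'})$ holds iff this last identity holds for all $x$, i.e. iff $(IOM^{''})$ holds.

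I expect the only delicate points to be purely notational: keeping the roles of the variables straight in the identities $(5)$, $(6)$ of Lemma \ref{qbe-10}, and applying De Morgan in the correct direction. The conceptual key -- what makes the per-pair substitutions upgrade to equivalences of the axioms -- is that replacing $y$ by $y^*$ (or $x$ by $x^*$) is legitimate precisely because $^*$ is an involutive bijection on $X$; there is no genuine computational obstacle beyond this bookkeeping.
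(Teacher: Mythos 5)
The paper never proves this lemma: it is imported verbatim from \cite{Ciu83} as background material, so there is no in-paper argument to compare yours against. Judged on its own, your proof is correct and is fully self-contained relative to facts the paper does record. The substitution $y\mapsto y^*$ together with Lemma \ref{qbe-10}$(5)$ turns $(IOM)$ into $(IOM^{'})$, and with Lemma \ref{qbe-10}$(6)$ turns $(IOM^{'})$ back into $(IOM)$; the De Morgan identity of Proposition \ref{qbe-20}$(3)$ plus $(x\ra y)^{**}=x\ra y$ rewrites $(IOM^{''})$ as $x^*\Cap (x\ra y)=x^*$, which is exactly $(IOM^{'})$ evaluated at $(x^*,y)$; and surjectivity of $^*$ (from $x=x^{**}$) upgrades these substitution instances to equivalences of the universally quantified conditions. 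The one genuinely delicate point is the one you flagged yourself: your argument proves the three conditions equivalent \emph{as axioms} (each quantified over all $x,y$), not pointwise for a fixed pair $(x,y)$ --- indeed your translations move the point of evaluation from $(x,y)$ to $(x,y^*)$ or $(x^*,y)$. That reading is the right one here: Definition \ref{ioml-30-10} and everything downstream in the paper use the conditions only as defining axioms of implicative-orthomodular lattices, so your proof suffices for every use the paper makes of the lemma.
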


\begin{definition} \label{ioml-30-10} \emph{(\cite{Ciu83})} 
\emph{
An \emph{implicative-orthomodular lattice} (IOML for short) is an implicative involutive BE algebra satisfying 
any one of the equivalent conditions from Lemma \ref{ioml-30}.
}
\end{definition}

The orthomodular lattices $(X,\wedge,\vee,^{'},0,1)$ are term-equivalent to implicative-orthomodular 
lattices $(X,\ra,^*,1)$, by the mutually inverse transformations: \\
$\hspace*{3cm}$ $\varphi:$\hspace*{0.2cm}$ x\ra y:=(x\wedge y^{'})^{'}$ $\hspace*{0.1cm}$ and  
                $\hspace*{0.1cm}$ $\psi:$\hspace*{0.2cm}$ x\wedge y:=(x\ra y^*)^*$, \\
and the relation $x\vee y:=(x^{'}\wedge y^{'})^{'}=x^*\ra y$. 
The unary operation ``$^*$" is defined as $x^*:=x\ra 0=(x\wedge 0^{'})^{'}=x^{'}$. 

\begin{proposition} \label{ioml-40} \emph{(\cite{Ciu83})}
Let $X$ be an implicative-orthomodular lattice. Then the following hold for all $x,y,z\in X$: \\
$(1)$ $x\Cap (y\Cup x)=x$ and $x\Cup (y\Cap x)=x$. \\
If $x\le_Q y$, then: \\
$(2)$ $y\Cup x=y$ and $y^*\le_Q x^*;$ \\ 
$(3$ $y\ra z\le_Q x\ra z$ and $z\ra x\le_Q z\ra y;$ \\
$(4)$ $x\Cap z\le_Q y\Cap z$ and $x\Cup z\le_Q y\Cup z$. 
\end{proposition}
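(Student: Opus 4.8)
The plan is to dispatch $(1)$ and $(2)$ by short conceptual arguments and then treat $(3)$ and $(4)$, whose content is monotonicity, by reduction to these via duality. For $(1)$, the two identities are De Morgan duals, so I prove the first and dualize. Since $y\Cup x=(y\ra x)\ra x$, the expression $x\Cap(y\Cup x)=x\Cap\bigl((y\ra x)\ra x\bigr)$ is exactly the instance of $(IOM)$ from Lemma \ref{ioml-30} obtained by replacing its second variable with $y\ra x$; hence it equals $x$. For the second identity I apply Proposition \ref{qbe-20}$(3)$: $x\Cup(y\Cap x)=\bigl(x^*\Cap(y\Cap x)^*\bigr)^*=\bigl(x^*\Cap(y^*\Cup x^*)\bigr)^*$, and the already-proved first identity applied to $x^*,y^*$ gives $x^*\Cap(y^*\Cup x^*)=x^*$, so the value is $x$.

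For $(2)$ I first observe that the two claims are equivalent, and in fact I deduce $y^*\le_Q x^*$ from $y\Cup x=y$: unwinding the definition of $\Cap$ and using involutivity, $y^*\Cap x^*=\bigl((y\ra x)\ra x\bigr)^*=(y\Cup x)^*$, so $y^*\le_Q x^*$ holds iff $y^*=(y\Cup x)^*$ iff $y\Cup x=y$. It thus remains to prove $y\Cup x=y$. Since $x\le_Q y$ gives $x\le_L y$ by Lemma \ref{iol-30-20}$(2)$, and $x\le_L y$ means $y\ra x^*=x^*$ (the remark following the definition of $\le_L$), I may apply $(IOM^{''})$ of Lemma \ref{ioml-30} with second variable $x^*$, obtaining $y\Cup(y\ra x^*)^*=y$. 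As $(y\ra x^*)^*=x^{**}=x$, this is precisely $y\Cup x=y$.

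Parts $(3)$ and $(4)$ each consist of a pair of mutually dual claims. Using the equivalence $x\le_Q y\Leftrightarrow y^*\le_Q x^*$ (part $(2)$ applied in both directions), the De Morgan identities of Proposition \ref{qbe-20}$(3)$, and the relations $z\ra x=x^*\ra z^*$ and $y\ra z=z^*\ra y^*$ of Lemma \ref{qbe-10}$(6)$, the second claim in each part follows from the first; so the real content is the antitonicity $y\ra z\le_Q x\ra z$ in $(3)$ and the monotonicity $x\Cup z\le_Q y\Cup z$ in $(4)$. I would attack these by reducing the target $\le_Q$ to its defining identity $a=a\Cap b$ (equivalently $a^*=a^*\Cup b^*$), feeding in $y\Cup x=y$ from part $(2)$ and $x\Cap(y\Cup x)=x$ from part $(1)$, and simplifying with the implication laws of Lemma \ref{qbe-10} and the orthomodular identities $(IOM)$, $(IOM^{'})$, $(IOM^{''})$.

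The main obstacle I expect is exactly this last step. Unlike the Boolean or purely lattice-theoretic case, here $\Cup$ and $\Cap$ are \emph{not} the lattice join and meet and $\le_Q$ is genuinely the quantum order, so the monotonicity in $(3)$ and $(4)$ cannot be read off from a distributive computation and must be forced through using orthomodularity. A convenient auxiliary step, if it can be established, would be to show that on the relevant elements $\le_Q$ and $\le_L$ coincide, since $\le_L$ is an order relation (Proposition \ref{qbe-20}$(7)$) interacting well with $\ra$ (Lemma \ref{iol-30-20}$(3)$); with such a bridge, $(3)$ and $(4)$ reduce to routine monotonicity checks for $\le_L$.
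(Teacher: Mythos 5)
Your proofs of parts $(1)$ and $(2)$ are correct and complete: $(1)$ is indeed the instance of $(IOM)$ with second variable $y\ra x$, plus the De Morgan identity of Proposition \ref{qbe-20}$(3)$; and $(2)$ follows, as you say, from Lemma \ref{iol-30-20}$(2)$, the reformulation $x\le_L y\Leftrightarrow y\ra x^*=x^*$, and $(IOM^{''})$ with second variable $x^*$, while your identity $y^*\Cap x^*=(y\Cup x)^*$ correctly makes the two claims of $(2)$ equivalent. The duality reductions you set up for $(3)$ and $(4)$ (part $(2)$ applied in both directions, Proposition \ref{qbe-20}$(3)$, Lemma \ref{qbe-10}$(6)$) are also sound. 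Note that the paper itself offers no proof to compare against, since Proposition \ref{ioml-40} is quoted from \cite{Ciu83}; so your argument stands or falls on its own.

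It falls at exactly the point you flag yourself: the core antitonicity claim $x\le_Q y\Rightarrow y\ra z\le_Q x\ra z$ is never proved — you describe how you "would attack" it and name it as the main obstacle. That is a plan, not a proof, and nothing in the plan forces it to close. The gap is genuinely fillable with the tools you cite, so let me indicate how. From $x\le_Q y$ get $x\le_L y$ (Lemma \ref{iol-30-20}$(2)$), hence $y^*\le_L x^*$ (Lemma \ref{iol-30-20}$(1)$); combined with $x^*\le_L x\ra z$ (Lemma \ref{iol-30-20}$(3)$) and transitivity of $\le_L$ (Proposition \ref{qbe-20}$(7)$), this gives $y^*\le_L x\ra z$. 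Now the key trick: if $a\le_L b$, i.e. $a^*=b\ra a^*$, then $(Impl)$ yields $a^*\ra b=(b\ra a^*)\ra b=b$; applied to $a=y^*$, $b=x\ra z$ this gives $y\ra(x\ra z)=x\ra z$. On the other hand, $y\ra z\le_L x\ra z$ is equivalent (via Lemma \ref{iol-30-20}$(1)$) to $x\ra z=(y\ra z)^*\ra(x\ra z)$, and using Lemma \ref{qbe-10}$(5)$, $(BE_4)$ and $(iG)$ one computes $(y\ra z)^*\ra(x\ra z)=y\ra((x\ra z)^*\ra z)=y\ra(x\ra z)$; so the condition is exactly the identity just established, and Theorem \ref{dioml-05-50} upgrades $\le_L$ to $\le_Q$. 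With $(3)$ in hand, $(4)$ needs no separate computation: applying $(3)$ twice gives $x\Cup z=(x\ra z)\ra z\le_Q(y\ra z)\ra z=y\Cup z$, and the $\Cap$ statement follows by your duality scheme. Finally, your proposed "bridge" $\le_Q=\le_L$ on an IOML is free — the paper states it right after Theorem \ref{dioml-05-50} — but, as the computation above shows, invoking it does not by itself make the monotonicity checks routine; the $(Impl)$ argument is still the missing idea.
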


\begin{proposition} \label{ioml-50} \emph{(\cite{Ciu83})}
Let $X$ be an implicative-orthomodular lattice. Then the following hold for all $x,y,z\in X$: \\
$(1)$ $x\ra (y\Cap x)=x\ra y;$ \\  
$(2)$ $(x\Cup y)\ra (x\ra y)^*=y^*;$ \\ 
$(3)$ $x\Cap ((y\ra x)\Cap (z\ra x))=x;$ \\
$(4)$ $(x\ra y)\ra (y\Cap x)=x;$ \\
$(5)$ $\le_Q$ is an order relation on $X;$ \\
$(6)$ $x\le_Q y$ and $x\le y$ imply $x=y;$ \\
$(7)$ $x\Cap y\le_Q y\le_Q x\Cup y;$ \\
$(7)$ $(x\Cap y)\ra (y\Cap x)=1;$ \\
$(8)$ $(x\Cup y)\ra (y\Cup x)=1$.           
\end{proposition}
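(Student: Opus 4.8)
The plan is to extract a single orthomodular ``closure'' identity and then derive each item from it together with the involutive identities of Lemma~\ref{qbe-10}. Concretely, I would first prove the
\[
\text{closure lemma:}\qquad ((a\ra v)\ra v)\ra v=a\ra v\quad(\text{equivalently } (a\Cup v)\ra v=a\ra v),
\]
for all $a,v\in X$. To obtain it I start from the form $(IOM^{''})$ of Lemma~\ref{ioml-30}, namely $s\Cup (s\ra t)^*=s$, instantiated at $s=a\ra v$ and $t=v^*$: since $(a\ra v)\ra v^*=v^*$ (an absorption identity that already holds in every implicative involutive BE algebra, using only Lemma~\ref{qbe-10} and the axioms of Lemma~\ref{iol-30}), one has $(s\ra t)^*=v$, and $(IOM^{''})$ collapses to $(a\Cup v)\ra v=a\ra v$. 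I regard this as the crux and the main obstacle: the identity is false in non-orthomodular ortholattices, so this is exactly the point where the hypothesis $(IOM)$ must be spent, and every subsequent step is a disciplined re-use of the same orthomodular content.

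With the closure lemma available, item $(1)$ is immediate: writing $y\Cap x=(y^*\Cup x^*)^*$ (Proposition~\ref{qbe-20}(3)) and transporting the star by Lemma~\ref{qbe-10}(3) gives $x\ra(y\Cap x)=(y^*\Cup x^*)\ra x^*=((y^*\ra x^*)\ra x^*)\ra x^*$, which the closure lemma reduces to $y^*\ra x^*=x\ra y$ (Lemma~\ref{qbe-10}(6)). The equational items $(2)$ and $(4)$ follow the same recipe, but each needs one genuine application of the orthomodular law rather than the bare closure lemma: one rewrites the occurrences of $\Cap,\Cup$ through Proposition~\ref{qbe-20}(3), moves stars with Lemma~\ref{qbe-10}(3),(5),(6), and then discharges the single remaining orthomodular step through $(IOM^{'})$ or $(IOM^{''})$.

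For $(5)$, reflexivity and antisymmetry of $\le_Q$ are free from Proposition~\ref{qbe-20}(2), so only transitivity is new, and my strategy is to prove $\le_Q\,=\,\le_L$. One inclusion is Lemma~\ref{iol-30-20}(2). For the converse, starting from $x\le_L y$, i.e.\ $x^*=x\ra y^*$, I substitute into $(x^*\ra y^*)\ra y^*=((x\ra y^*)\ra y^*)\ra y^*=(x\Cup y^*)\ra y^*$ and collapse by the closure lemma to $x\ra y^*$, which equals $x^*$ by hypothesis; thus $(x^*\ra y^*)\ra y^*=x^*$, which is precisely $x\le_Q y$. Since $\le_L$ is an order relation by Proposition~\ref{qbe-20}(7), so is $\le_Q$, giving transitivity.

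The remaining items are consequences of this order structure. For $(3)$, $(IOM)$ yields $x\le_Q y\ra x$ and $x\le_Q z\ra x$; applying the monotonicity of $\Cap$ from Proposition~\ref{ioml-40}(4) to the first and using $x\Cap(z\ra x)=x$ gives $x\le_Q(y\ra x)\Cap(z\ra x)$, which is the claim. For $(7)$, the relation $y\le_Q x\Cup y$ is the first identity of Proposition~\ref{ioml-40}(1) with the variables interchanged, and $x\Cap y\le_Q y$ is its De Morgan dual via $a\le_Q b\Leftrightarrow b^*\le_Q a^*$ (Lemma~\ref{iol-30-20}(1) together with $\le_Q=\le_L$). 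Item $(6)$ is a direct application of the cancellation law (Proposition~\ref{qbe-20}(5)): the hypotheses provide the common $\le_Q$-upper bound and turn into an equality of the form $z\ra x=z\ra y$, whence $x=y$. Finally $(x\Cap y)\ra(y\Cap x)=1$ and $(x\Cup y)\ra(y\Cup x)=1$ are De Morgan duals of each other by Lemma~\ref{qbe-10}(6), so it suffices to prove one: reducing $(x\Cup y)\ra(y\Cup x)$ by $(BE_4)$ to $(y\ra x)\ra((x\Cup y)\ra x)$ and then using $(x\Cup y)\ra x=y\ra x$ (which unwinds, through Lemma~\ref{qbe-10}(5) and Proposition~\ref{qbe-20}(3), to the orthomodular identity $x^*\ra(x^*\Cap y^*)=x^*\ra y^*$, whose right side is $y\ra x$ by Lemma~\ref{qbe-10}(6)) turns the expression into $(y\ra x)\ra(y\ra x)=1$ by $(BE_1)$.
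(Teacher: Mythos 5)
The paper never proves Proposition \ref{ioml-50}: it is imported as a preliminary from \cite{Ciu83}, so there is no in-paper argument to compare yours against, and your proof has to stand on its own. For the most part it does. Your closure lemma is correctly obtained: $(a\ra v)\ra v^*=(v^*\ra a^*)\ra v^*=v^*$ by Lemma \ref{qbe-10}(6) and $(Impl)$, so $(IOM^{''})$ applied to the element $a\ra v$ indeed collapses to $((a\ra v)\ra v)\ra v=a\ra v$. Items $(1)$, $(3)$, $(5)$, the first of the two items numbered $(7)$, and $(8)$ check out as you describe; in particular the key step $(x\Cup y)\ra x=x^*\ra(x^*\Cap y^*)=x^*\ra y^*=y\ra x$ is legitimate because $(QW_1)$ holds in any IOML by Theorem \ref{ioml-90-10}, and the reduction of the duplicated item $(7)$ to $(8)$ via Lemma \ref{qbe-10}(6) is valid. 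Items $(2)$ and $(4)$ you leave as ``the same recipe,'' which is thin, but the recipe does terminate: $(x\ra y)\ra(y\Cap x)=((x\ra y)\ra x^*)\ra(x\ra y)^*=(x\ra(x\ra y)^*)\ra(x\ra y)^*=x\Cup(x\ra y)^*=x$ by $(IOM^{''})$, and similarly $(x\Cup y)\ra(x\ra y)^*=y^*\Cup(y^*\ra x^*)^*=y^*$.

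The one real defect is item $(6)$. As printed its hypotheses are $x\le_Q y$ and $x\le y$; but Proposition \ref{qbe-20}(4) says $x\le_Q y$ already implies $x\le y$, so the printed statement would assert that $x\le_Q y$ forces $x=y$, which is false (take $x=0$, $y=1$: $0\Cap 1=0$ and $0\le 1$). The statement is evidently a typo for ``$x\le_Q y$ and $y\le x$ imply $x=y$.'' Your cancellation argument silently uses this corrected reading: to invoke Proposition \ref{qbe-20}(5) with common upper bound $z=y$ you need $y\ra x=1=y\ra y$, i.e.\ $y\le x$, and that cannot be extracted from $x\le y$. So your sentence ``the hypotheses \dots\ turn into an equality of the form $z\ra x=z\ra y$'' is not true of the literal hypotheses. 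Flag the typo and prove the corrected statement; with that emendation your argument for $(6)$, and hence the whole proof, is complete.
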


\begin{theorem} \label{ioml-90-10} \emph{(\cite{Ciu84})} 
Let $X$ be an implicative involutive BE algebra. The following are equivalent: \\
$(a)$ $X$ is an implicative-orthomodular lattice; \\
$(b)$ $X$ satisfies condition $(QW_1);$ \\ 
$(c)$ $X$ satisfies condition $(QW_2);$ \\ 
$(d)$ $X$ satisfies condition $(QW)$. 
\end{theorem}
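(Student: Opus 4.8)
The plan is to lean on the equivalence $(QW)\LR (QW_1)\wedge (QW_2)$ already recorded in the Preliminaries (from \cite{Ciu83}), which is exactly $(d)\LR (b)\wedge(c)$, and to bring condition $(a)$ into the picture by proving the two free-standing equivalences $(a)\LR(b)$ and $(a)\LR(c)$. Once these are in hand the theorem closes formally: from $(b)\LR(a)\LR(c)$ we get $(b)\LR(c)$, so each of $(b)$, $(c)$ is equivalent to $(b)\wedge(c)$ and hence, via the cited equivalence, to $(d)$; combined with $(a)\LR(b)$ this makes all four conditions mutually equivalent. Thus the whole problem reduces to the two substantive claims $(a)\LR(b)$ and $(a)\LR(c)$.

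First I would treat the forward directions $(a)\Ri(b)$ and $(a)\Ri(c)$, where $X$ is assumed to be an IOML and the full toolkit of Propositions \ref{ioml-40} and \ref{ioml-50} is available. The key identity is Proposition \ref{ioml-50}(1), $x\ra (y\Cap x)=x\ra y$, which I would use together with the commutativity of $\Cap$ in an IOML (Proposition \ref{ioml-50}(7),(8)) to rewrite it as $x\ra(x\Cap y)=x\ra y$, i.e.\ exactly $(QW_1)$. For $(QW_2)$ I would expand the right-hand side $(x\ra y)\Cap(x\ra z)$ and the left-hand side $x\ra(y\Cap(z\Cap x))$ and reduce both to a common normal form, the admissible rewrites being the $\le_Q$-monotonicity of $\ra$ and $\Cap$ (Proposition \ref{ioml-40}(3),(4)), Proposition \ref{ioml-50}(1),(3), and the cancellation law Proposition \ref{qbe-20}(5).

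For the converse directions $(b)\Ri(a)$ and $(c)\Ri(a)$ the strategy is to recover orthomodularity in the convenient form $(IOM^{''})$, $x\Cup(x\ra y)^*=x$. Using the involutive laws of Lemma \ref{qbe-10}(3),(6) and $x\Cap y=(x^*\Cup y^*)^*$ (Proposition \ref{qbe-20}(3)), I would first rewrite $(QW_1)$ in the dual form $(x\Cup y)\ra x=y\ra x$. Applying this with $y:=(x\ra y)^*$ and using $(x\ra y)^*\le x$ — which follows from Lemma \ref{iol-30-20}(3),(1) together with $\le_L\,\Ri\,\le$ (Proposition \ref{qbe-20}(6)) — gives $(x\Cup(x\ra y)^*)\ra x=1$, i.e.\ $x\Cup(x\ra y)^*\le x$; the reverse inequality $x\le x\Cup(x\ra y)^*$ is just Lemma \ref{qbe-10}(2). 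This squeezes $x\Cup(x\ra y)^*$ onto $x$ and yields $(IOM^{''})$. For $(c)\Ri(a)$ I would first specialize $(QW_2)$ — e.g.\ $z:=x$, using $x\Cap x=x$ and $1\Cap a=a\Cap 1=a$ — to produce $(QW_1)$-type identities, and then proceed as above.

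The main obstacle is not any single algebraic identity but the order-theoretic bookkeeping in the converse directions, where $X$ is only a bare implicative involutive BE algebra. There $\le$ need not be antisymmetric, so a computation producing $u\le x$ and $x\le u$ does not by itself give $u=x$; the two inequalities must be refined to the relation $\le_Q$, which is antisymmetric (Proposition \ref{qbe-20}(2)), before equality can be concluded, and the cancellation law (Proposition \ref{qbe-20}(5)) invoked. Controlling the interplay of $\le$, $\le_L$ and $\le_Q$ — upgrading the $\le$-facts to $\le_Q$-facts precisely where the final equality is needed — is the delicate heart of the argument; once it is settled, the $\Cap/\Cup$ manipulations reduce to routine applications of Lemmas \ref{qbe-10}, \ref{iol-30}, \ref{iol-30-20} and Propositions \ref{qbe-20}, \ref{ioml-40}, \ref{ioml-50}.
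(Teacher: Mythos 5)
First, a point of order: the paper itself gives no proof of this theorem --- it is quoted from \cite{Ciu84} as background --- so your proposal can only be judged on its own merits. Its logical skeleton is fine: proving $(a)\Leftrightarrow(b)$ and $(a)\Leftrightarrow(c)$ and combining them with the cited equivalence of $(QW)$ with the conjunction of $(QW_1)$ and $(QW_2)$ would indeed close the cycle. The problem is that the substantive implications are not actually established.

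The most serious error is in $(a)\Rightarrow(b)$: you pass from Proposition \ref{ioml-50}(1), $x\ra(y\Cap x)=x\ra y$, to $(QW_1)$, $x\ra(x\Cap y)=x\ra y$, by invoking ``commutativity of $\Cap$'' with Proposition \ref{ioml-50}(7),(8) as justification. Those items assert only $(x\Cap y)\ra(y\Cap x)=1$ and $(x\Cup y)\ra(y\Cup x)=1$, i.e.\ mutual $\le$-comparisons, and $\le$ is not antisymmetric here. In fact $\Cap$ is genuinely non-commutative in an IOML: in the IOML corresponding to the orthomodular lattice $MO_2$, two distinct non-complementary atoms $a,b$ give $a\Cap b=b$ but $b\Cap a=a$. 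Worse, commutativity of $\Cap$ is equivalent, via $x\Cup y=(x^*\Cap y^*)^*$ (Proposition \ref{qbe-20}(3)), to commutativity of $\Cup$, which by the paper's own definition would make every IOML an implicative-Boolean algebra --- collapsing the very distinction the paper is built on. So $(QW_1)$ needs a different argument (e.g.\ one routed through the Foulis--Holland Theorem \ref{cioml-30}). Second, for $(a)\Rightarrow(c)$ you offer only the plan of ``reducing both sides to a common normal form''; since $(QW_2)$ is precisely the substantive identity at stake, this is a hope, not a proof. Third, in the converse directions your reduction of $(QW_1)$ to the dual form $(x\Cup y)\ra x=y\ra x$ and the derivation of the two inequalities $x\Cup(x\ra y)^*\le x$ and $x\le x\Cup(x\ra y)^*$ are correct, but you yourself observe that mutual $\le$ does not yield equality and that the inequalities ``must be refined to $\le_Q$'' --- and you never perform that refinement. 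That upgrade \emph{is} the content of $(b)\Rightarrow(a)$; labelling it the ``delicate heart of the argument'' and deferring it means the converse directions remain unproven. (Note also that in those directions Propositions \ref{ioml-40} and \ref{ioml-50}, which you list among the routine tools, are unavailable: they presuppose that $X$ is already an IOML, so appealing to them there would be circular.)
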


It follows that any implicative-orthomodular lattice is a quantum-Wajsberg algebra. 

\begin{theorem} \label{dioml-05-50} \emph{(\cite{Ciu84})} Let $X$ be an implicative involutive BE algebra. 
The following are equivalent: \\
$(a)$ $X$ is an implicative-orthomodular lattice; \\
$(b)$ $\le_L$ implies $\le_Q;$ \\
$(c)$ for $x,y\in X$, $x\le_L y$ implies $y=y\Cup x$.  
\end{theorem}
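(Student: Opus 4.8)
The plan is to prove the two short cycles $(a)\Rightarrow(b)\Rightarrow(a)$ and $(a)\Rightarrow(c)\Rightarrow(a)$, making $(a)$ (being an IOML) the hub through which everything passes. The engine driving the whole argument is the pair of \emph{unconditional} relations supplied by Lemma \ref{iol-30-20}$(3)$, namely $x\le_L y\ra x$ and $x^*\le_L x\ra y$, combined with the order-reversal of $\le_L$ under ${}^*$ (Lemma \ref{iol-30-20}$(1)$) and the bookkeeping equivalence ``$x\le_L y$ iff $x^*=y\ra x^*$''. The one point I must keep straight is that $x\le_Q y$ is \emph{defined} as $x=x\Cap y$; this is exactly what makes the key steps fall out immediately, and it is the reason the stated (noncommutative) argument order of $\Cap$ and $\Cup$ matches the axioms.

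For $(b)\Rightarrow(a)$ I would instantiate the unconditional relation $x\le_L y\ra x$ and feed it to the hypothesis $(b)$, obtaining $x\le_Q y\ra x$; by the very definition of $\le_Q$ this reads $x\Cap(y\ra x)=x$, which is precisely axiom $(IOM)$ of Lemma \ref{ioml-30}, so $X$ is an IOML by Definition \ref{ioml-30-10}. For the converse $(a)\Rightarrow(b)$, assume $x\le_L y$. By Lemma \ref{iol-30-20}$(1)$ this is equivalent to $y^*\le_L x^*$, and unfolding the definition ``$u\le_L v$ iff $u^*=v\ra u^*$'' with $u=y^*$, $v=x^*$ gives $y=x^*\ra y$. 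Substituting this into $x\Cap y$ and applying axiom $(IOM^{'})$, i.e. $x\Cap(x^*\ra y)=x$, yields $x\Cap y=x$, that is $x\le_Q y$. This establishes $(a)\Leftrightarrow(b)$ cleanly.

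For $(a)\Rightarrow(c)$, given $x\le_L y$ I use the equivalence just established to pass to $x\le_Q y$, and then Proposition \ref{ioml-40}$(2)$ gives $y\Cup x=y$ outright. For $(c)\Rightarrow(a)$ I would run the second unconditional relation through ${}^*$: from $x^*\le_L x\ra y$ (Lemma \ref{iol-30-20}$(3)$) and Lemma \ref{iol-30-20}$(1)$ one obtains $(x\ra y)^*\le_L x$, and applying the hypothesis $(c)$ to the pair $((x\ra y)^*,\,x)$ produces $x=x\Cup(x\ra y)^*$, which is exactly axiom $(IOM^{''})$; hence $X$ is an IOML. I do not expect a genuine obstacle here, only a pitfall to guard against: since $\Cap$ and $\Cup$ are not assumed commutative, one must check that $(b)$ and $(c)$ as written deliver the axioms in the precise argument order of Lemma \ref{ioml-30}. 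This succeeds exactly because $\le_Q$ is built from $x\Cap y$ (test element $x$ in first position) while Proposition \ref{ioml-40}$(2)$ is phrased as $y\Cup x=y$ (with $y$ in first position), so the instantiations line up without needing any commutativity of the operations.
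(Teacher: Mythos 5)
Your proof is correct, but there is nothing in the paper to compare it against: Theorem~\ref{dioml-05-50} is stated here as an imported result from \cite{Ciu84}, with no proof given in this paper. Judged on its own, your argument is sound and self-contained relative to the paper's preliminaries. Each step checks out: $(b)\Rightarrow(a)$ follows since Lemma~\ref{iol-30-20}$(3)$ gives $x\le_L y\ra x$ unconditionally, and $(b)$ turns this into $x=x\Cap(y\ra x)$, which is $(IOM)$; for $(a)\Rightarrow(b)$, $x\le_L y$ gives $y^*\le_L x^*$ by Lemma~\ref{iol-30-20}$(1)$, hence $y=x^*\ra y$ by the bookkeeping equivalence and involutivity, so $x\Cap y=x\Cap(x^*\ra y)=x$ by $(IOM^{'})$; $(a)\Rightarrow(c)$ is immediate from $(a)\Rightarrow(b)$ and Proposition~\ref{ioml-40}$(2)$; and $(c)\Rightarrow(a)$ correctly dualizes $x^*\le_L x\ra y$ to $(x\ra y)^*\le_L x$ and instantiates $(c)$ to obtain $(IOM^{''})$. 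Your care about the argument order of $\Cap$ and $\Cup$ is well placed and the instantiations do line up as you claim. One point worth making explicit if you write this up: Lemma~\ref{iol-30-20}$(1)$ and $(3)$ are themselves facts about implicative involutive BE algebras proved via $(Impl)$ (part $(3)$ is proved inline in the paper, part $(1)$ is cited to \cite[Lemma 3.4]{Ciu84}), so no orthomodularity is smuggled into the directions $(b)\Rightarrow(a)$ and $(c)\Rightarrow(a)$, and the argument is not circular.
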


As a consequence, if $X$ is an implicative-orthomodular lattice, then $\le_Q=\le_L$. \\
An implicative-orthomodular lattice $X$ is an \emph{implicative-Boolean algebra} if $x\Cup y=y\Cup x$, 
for all $x,y\in X$ (\cite{Ciu84}). We mention that the notion of implicative-Boolean algebras was 
first introduced by A. Iorgulescu in 2009 (\cite{Ior35}). 

\begin{definition} \label{cioml-10} \emph{(\cite{Ciu84})}
\emph{
Let $X$ be an implicative involutive BE algebra, and let $x,y,z\in X$. \\
$(1)$ We say that $x$ \emph{commutes} with $y$, denoted by $x\mathcal{C} y$, if $x=(x\ra y^*)\ra (x\ra y)^*$. \\
$(2)$ The triple $(x,y,z)$ is \emph{distributive} if it satisfies the following conditions: \\
$(Idis_1)$ $((x^*\ra y)\ra z^*)^*=(x\ra z^*)\ra (y\ra z^*)^*$, \\
$(Idis_2)$ $((x\ra y^*)\ra z)^*=((z^*\ra x)\ra (z^*\ra y)^*$. \\
$(3)$ $X$ is distributive if all triples $(x,y,z)$ are distributive. 
} 
\end{definition}

\begin{proposition} \label{cioml-20} \emph{(\cite{Ciu84})}
Let $X$ be an implicative involutive BE algebra and let $x,y\in X$. Then: \\
$(1)$ $x\mathcal{C} x$, $x\mathcal{C} 0$, $0\mathcal{C} x$, $x\mathcal{C} 1$, $1\mathcal{C} x$, $x\mathcal{C} x^*$, 
$x^*\mathcal{C} x;$ \\ 
$(2)$ $x\le_L y$ or $x\le_L y^*$ implies $x\mathcal{C} y;$ \\
$(3)$ $x\le_Q y$ or $x\le_Q y^*$ implies $x\mathcal{C} y;$ \\
$(4)$ $x\mathcal{C} (y\ra x);$ \\
$(5)$ if $X$ is an IOML, then $x\mathcal{C} y$ implies $x\mathcal{C} y^*$, $x^*\mathcal{C} y$, $x^*\mathcal{C} y^*$. 
\end{proposition}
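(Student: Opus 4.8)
The plan is to dispatch $(1)$--$(4)$ by direct computation and cheap reductions, and to isolate the real work in $(5)$. Writing $C(x,y):=(x\ra y^*)\ra(x\ra y)^*$, each identity in $(1)$ collapses in one line using the $BE$ axioms, involutivity and $(iG)$: e.g. $C(x,x)=(x\ra x^*)\ra(x\ra x)^*=x^*\ra 0=x^{**}=x$, and $C(0,x)=(0\ra x^*)\ra(0\ra x)^*=1\ra 1^*=0$; the remaining cases $x\mathcal{C}0$, $x\mathcal{C}1$, $1\mathcal{C}x$, $x\mathcal{C}x^*$, $x^*\mathcal{C}x$ are entirely analogous.

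For $(2)$ I would use that $x\le_L y$ iff $x\ra y^*=x^*$. If $x\le_L y$, then $C(x,y)=x^*\ra(x\ra y)^*=(x\ra y)\ra x^{**}=(x\ra y)\ra x=x$, by Lemma \ref{qbe-10}$(3)$ and $(Impl)$. If $x\le_L y^*$, then $x\ra y=x^*$, so $C(x,y)=(x\ra y^*)\ra x=x$ again by $(Impl)$. Hence $x\mathcal{C}y$ in both cases. Then $(3)$ is immediate from Lemma \ref{iol-30-20}$(2)$ ($\le_Q$ implies $\le_L$), and $(4)$ is immediate from Lemma \ref{iol-30-20}$(3)$ ($x\le_L y\ra x$) combined with $(2)$.

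The heart of the proposition is $(5)$. I would first record the identity $C(x,y)=C(x,y^*)$, valid in any involutive BE algebra: with $a:=x\ra y^*$ and $b:=x\ra y$ one has $C(x,y)=a\ra b^*$ and $C(x,y^*)=b\ra a^*$, and these agree by Lemma \ref{qbe-10}$(3)$. Thus $x\mathcal{C}y\Leftrightarrow x\mathcal{C}y^*$ unconditionally; this already gives $x\mathcal{C}y^*$, and it reduces the remaining two conclusions to the single implication $x\mathcal{C}y\Rightarrow x^*\mathcal{C}y$ (applying the equivalence to $x^*$ then turns $x^*\mathcal{C}y$ into $x^*\mathcal{C}y^*$). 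To handle this implication I would pass to the term-equivalent orthomodular lattice, in which $x\wedge y=(x\ra y^*)^*$ and $x\vee y=x^*\ra y$ are the (commutative) meet and join and a short computation gives $C(x,y)=(x\wedge y)\vee(x\wedge y^*)$; so $x\mathcal{C}y$ is exactly the classical commutation $x=(x\wedge y)\vee(x\wedge y^*)$, and similarly $x\mathcal{C}y^*$.

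With these preparations the argument is short: since $x\mathcal{C}y$ and (by the equivalence above) $x\mathcal{C}y^*$, the orthomodular (Foulis--Holland) distributive law yields $(x\vee y)\wedge(x\vee y^*)=x\vee(y\wedge y^*)=x\vee 0=x$; complementing this via the De Morgan laws (which hold already in the involutive BE algebra) gives $x^*=(x^*\wedge y^*)\vee(x^*\wedge y)=(x^*\wedge y)\vee(x^*\wedge y^*)$, that is, $x^*\mathcal{C}y$. I expect the Foulis--Holland step to be the main obstacle: the equality $(x\vee y)\wedge(x\vee y^*)=x$ is precisely where orthomodularity is indispensable, being false in a general ortholattice and available only because $x$ commutes with both $y$ and $y^*$. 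In the BE-algebra language I would derive it from the orthomodular identities of Lemma \ref{ioml-30}; alternatively, since the commutation relation defined here coincides with the classical one, one may invoke directly the standard commutation and Foulis--Holland theorems for orthomodular lattices (cf. \cite{Beran, Birk2}).
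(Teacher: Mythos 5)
Your proof is correct, but note that a direct comparison is not really possible: the paper states Proposition \ref{cioml-20} without proof, importing it from \cite{Ciu84}, so your argument can only be judged on its own terms and against the tools the paper makes available. Parts $(1)$--$(4)$ are exactly the expected direct computations: your use of $(iG)$, $(Impl)$, Lemma \ref{qbe-10}$(3)$, and the reformulation $x\le_L y$ iff $x\ra y^*=x^*$ is sound, and the reductions of $(3)$ and $(4)$ to $(2)$ via Lemma \ref{iol-30-20} are the intended ones.

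In $(5)$ you take a route that is worth commenting on. First, your observation that $(x\ra y^*)\ra (x\ra y)^*=(x\ra y)\ra (x\ra y^*)^*$ follows from Lemma \ref{qbe-10}$(3)$ alone gives $x\mathcal{C}y\Leftrightarrow x\mathcal{C}y^*$ in any involutive BE algebra, with no orthomodularity; this is slightly stronger than what is stated and correctly reduces $(5)$ to the single implication $x\mathcal{C}y\Rightarrow x^*\mathcal{C}y$. Second, for that implication you pass to the term-equivalent orthomodular lattice: the identification $C(x,y)=(x\wedge y)\vee(x\wedge y^*)$ is verified correctly (since $x\wedge y=(x\ra y^*)^*$, $x\wedge y^*=(x\ra y)^*$ and $a\vee b=a^*\ra b$), so $\mathcal{C}$ coincides with the classical commutation relation, and the classical chain --- $x$ commutes with $y$ and $y^*$, Foulis--Holland gives $(x\vee y)\wedge (x\vee y^*)=x\vee (y\wedge y^*)=x$, then orthocomplementation and De Morgan give $x^*=(x^*\wedge y)\vee (x^*\wedge y^*)$ --- is standard and valid. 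This is legitimate because $(5)$ assumes $X$ is an IOML and the paper asserts the term-equivalence for IOMLs. The more self-contained alternative (presumably the route of \cite{Ciu84}) is to stay in the signature $(\ra,{}^*,1)$ and apply the paper's own Foulis--Holland theorem (Theorem \ref{cioml-30}) with $(Idis_1)$ to the triple $(x,y,y^*)$, which avoids leaning on the equivalence with orthomodular lattices, stated in the preliminaries but not proved there. Either way the conclusion follows; your version buys brevity at the cost of making that term-equivalence carry the entire weight of the argument.
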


\begin{theorem} \label{cioml-30} \emph{(\cite{Ciu84})} \emph{(Foulis-Holland theorem)} 
Let $X$ be an implicative-orthomodular lattice and let $x,y,z\in X$, such that 
one of these elements commutes with the other two. Then $(x,y,z)$ is a distributive triple. 
\end{theorem}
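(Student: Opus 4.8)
The efficient route is to transport the statement across the term-equivalence with orthomodular lattices recorded just before Proposition \ref{ioml-40}, under which $x\wedge y=(x\ra y^*)^*$, $x\vee y=x^*\ra y$, and $x^*$ is the orthocomplement. A direct rewriting shows that $(x\ra y^*)\ra(x\ra y)^*=(x\wedge y)\vee(x\wedge y^*)$, so that $x\,\mathcal{C}\,y$ is precisely the usual commutation relation $x=(x\wedge y)\vee(x\wedge y^*)$ of orthomodular lattices; likewise $(Idis_1)$ and $(Idis_2)$ translate, term by term, into the two mutually dual distributive identities $(x\vee y)\wedge z=(x\wedge z)\vee(y\wedge z)$ and $z\vee(x\wedge y)=(z\vee x)\wedge(z\vee y)$ for the triple $(x,y,z)$. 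Thus $(x,y,z)$ is a distributive triple exactly when both of these laws hold, and the theorem reduces to the classical Foulis--Holland theorem. One may therefore simply invoke that theorem; below I indicate how I would prove it intrinsically, so that the argument stays within the present framework.

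The core step is the case in which the ``outer'' variable is the one that commutes: if $z\,\mathcal{C}\,x$ and $z\,\mathcal{C}\,y$, then $(x\vee y)\wedge z=(x\wedge z)\vee(y\wedge z)$. Using the two commutations to write $x=(x\wedge z)\vee(x\wedge z^*)$ and $y=(y\wedge z)\vee(y\wedge z^*)$, I would regroup $x\vee y=p\vee q$, where $p=(x\wedge z)\vee(y\wedge z)\le z$ and $q=(x\wedge z^*)\vee(y\wedge z^*)\le z^*$, so that $p\le z\le q^*$. The inequality $p\le(x\vee y)\wedge z$ is automatic; the reverse inequality is where orthomodularity is essential. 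Here I would first establish the orthogonality identity $(p\vee q)\wedge q^*=p$ for $p\le q^*$, a direct consequence of $(OM^{'})$, and then write $(x\vee y)\wedge z=(p\vee q)\wedge z=p\vee\bigl(p^*\wedge(p\vee q)\wedge z\bigr)$ by $(OM^{'})$; the second summand lies below $(p\vee q)\wedge q^*=p$ (as it is below $p\vee q$ and below $z\le q^*$) and below $p^*$, hence equals $0$, giving $(x\vee y)\wedge z=p$. The dual identity $(Idis_2)$ for this case then follows by applying the core law to $x^*,y^*,z^*$, via De Morgan and Proposition \ref{cioml-20}(5).

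It remains to treat the cases in which $x$ (or, symmetrically in $(Idis_1)$ and $(Idis_2)$, $y$) is the element commuting with the other two; since the distinguished variable $z$ need not then commute with both $x$ and $y$, the core law does not apply directly and the mixed distributive identities are required. These I would derive from the core law by splitting along the commuting element: in the case $x\,\mathcal{C}\,y$ and $x\,\mathcal{C}\,z$, one first notes that $x$ commutes with $y$, $z$ and hence, by the core law, with $x\vee y$, $z\wedge(x\vee y)$ and $z\wedge y$ (commutation is preserved under $\wedge$, $\vee$ and $^*$); splitting $z\wedge(x\vee y)$ along $x$ and using the sub-identity $x^*\wedge(x\vee y)=x^*\wedge y$ (itself an instance of the core law, since $x^*\,\mathcal{C}\,x$ and $x^*\,\mathcal{C}\,y$) reduces the mixed law back to the commuting-outer case. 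Assembling these, together with the $x\leftrightarrow y$ symmetry of $(Idis_1)$ and $(Idis_2)$, covers all three hypotheses and completes the proof.

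The genuine obstacle is the reverse inequality in the core step, which truly requires $(OM^{'})$ and into which the commutation hypothesis feeds only through the regrouping $x\vee y=p\vee q$ with $p\le z$ and $q\le z^*$. The only other point needing care is that commutation is not inherited by arbitrary subelements, so the reduction of the mixed identities must be arranged so that orthomodularity is applied solely to orthogonal pairs; the resulting case bookkeeping is routine but must be carried out for each choice of the distinguished variable.
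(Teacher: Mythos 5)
Your proposal is correct, but note at the outset that this paper contains no proof of Theorem \ref{cioml-30} to compare against: the theorem is imported, with citation, from the companion paper \cite{Ciu84}, whose title indicates a derivation carried out natively in the language $(\ra,{}^*,1)$. Your route is genuinely different from that: you transport the statement across the term-equivalence $x\wedge y=(x\ra y^*)^*$, $x\vee y=x^*\ra y$ recorded just before Proposition \ref{ioml-40}, verify that $x\,\mathcal{C}\,y$ becomes the classical orthomodular commutation $x=(x\wedge y)\vee(x\wedge y^{'})$ and that $(Idis_1)$, $(Idis_2)$ become $(x\vee y)\wedge z=(x\wedge z)\vee(y\wedge z)$ and $z\vee(x\wedge y)=(z\vee x)\wedge(z\vee y)$, and then invoke the classical Foulis--Holland theorem (available in the cited monograph \cite{Beran}). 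I checked the three translation computations and they are exact, so this first paragraph alone is already a complete and legitimate proof; it is shorter than a native derivation, at the price of leaning on the classical theory, whereas a proof inside the implicative framework (as in \cite{Ciu84}) yields identities directly reusable in the $(\ra,{}^*)$ calculus. Your supplementary intrinsic sketch is the standard orthomodular-lattice proof (orthogonal regrouping $x\vee y=p\vee q$ with $p\le z\le q^*$, then $(OM^{'})$ twice to kill the excess term), and its steps check out.

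One point you should make explicit if the intrinsic sketch is ever expanded into a full argument: you repeatedly pass between $z\,\mathcal{C}\,x$ and $x\,\mathcal{C}\,z$ (the core step is stated under the hypotheses $z\,\mathcal{C}\,x$, $z\,\mathcal{C}\,y$ but immediately uses the decompositions $x=(x\wedge z)\vee(x\wedge z^*)$ and $y=(y\wedge z)\vee(y\wedge z^*)$, which are $x\,\mathcal{C}\,z$ and $y\,\mathcal{C}\,z$), and you assert that commutation is preserved under $\wedge$ and $\vee$. Symmetry of the commutation relation and closure of commutants under the lattice operations are theorems that themselves require orthomodularity; they are not among the facts listed in this paper (Proposition \ref{cioml-20}(5) gives closure under $^*$ only), so in a self-contained intrinsic proof they would have to be established rather than quoted. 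Since these are standard results in \cite{Beran}, the transport version of your argument --- the one you actually rely on --- is unaffected, and the proposal stands as correct.
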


$\vspace*{1mm}$

\section{On the deductive systems of implicative-orthomodular lattices} 

In this section, we redefine the ideals and filters presented in \cite{DvPu} in the case of quantum-MV algebras, by 
introducing the notions of deductive systems, order deductive systems, q-deductive systems, and 
p-deductive systems of implicative involutive BE algebras.  
Based on the deductive systems, we give characterizations of implicative-orthomodular lattices and 
implicative-Boolean algebras. 

\begin{definition} \label{dsioml-10} \emph{
Let $X$ be an implicative involutive BE algebra, and let $F\subseteq X$ be a nonempty subset of $X$. 
Then $F$ is called: \\
$(1)$ an \emph{order deductive system} (o-DS for short), if it satisfies the following condition:\\
$(F_1)$ $x\in F$, $y\in X$, $x\le_L y$ imply $y\in F;$ \\
$(2)$ a \emph{q-deductive system} (q-DS for short), if $F$ satisfies $(F_1)$ and \\
$(F_2)$ $x,y\in F$ implies $(x\ra y^*)^*\in F$. \\
$(3)$ a \emph{p-deductive system} (p-DS for short), if $F$ is a q-DS satisfying \\
$(F_3)$ $x\in F$, $y\in X$ imply $x\Cup y\in F$. 
}
\end{definition}

\begin{proposition} \label{dsioml-10-10} If $X$ is an implicative-orthomodular lattice, then $(F_1)$ is 
equivalent to condition: \\
$(F_1^{'})$ $x\in F$, $y\in X$ imply $y\ra x\in F$. 
\end{proposition}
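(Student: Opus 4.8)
The plan is to prove the two implications separately. I expect the implication $(F_1)\Rightarrow(F_1')$ to hold in any implicative involutive BE algebra, relying only on Lemma \ref{iol-30-20}(3), whereas the converse $(F_1')\Rightarrow(F_1)$ is where the orthomodularity of $X$ genuinely enters, through the characterization of $\le_L$ supplied by Theorem \ref{dioml-05-50}.

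For $(F_1)\Rightarrow(F_1')$, I would assume $(F_1)$ and take $x\in F$ and $y\in X$. By Lemma \ref{iol-30-20}(3) we have $x\le_L y\ra x$. Since $x\in F$ and $x\le_L y\ra x$, condition $(F_1)$ (applied with the element $y\ra x$ playing the role of the larger element) yields $y\ra x\in F$, which is precisely $(F_1')$. Note that this direction does not use any orthomodularity hypothesis.

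For $(F_1')\Rightarrow(F_1)$, I would assume $(F_1')$ and take $x\in F$ and $y\in X$ with $x\le_L y$. Here is where I invoke that $X$ is an implicative-orthomodular lattice: by Theorem \ref{dioml-05-50} (equivalence of $(a)$ and $(c)$), $x\le_L y$ implies $y=y\Cup x$. Unfolding the definition $y\Cup x=(y\ra x)\ra x$, this gives $y=(y\ra x)\ra x$. Applying $(F_1')$ with $x\in F$ and the element $y\ra x\in X$, we obtain $(y\ra x)\ra x\in F$, that is, $y\in F$, which establishes $(F_1)$.

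The computation is short once the right prior results are located, so I expect the only real subtlety to be recognizing which direction needs orthomodularity. The converse is the substantive one: the idea is to rewrite the target element $y$ in the form $z\ra x$ with $x\in F$, so that $(F_1')$ becomes directly applicable, and Theorem \ref{dioml-05-50}(c) furnishes exactly this presentation through $y=(y\ra x)\ra x$. Without orthomodularity, $x\le_L y$ need not force $y=y\Cup x$, so this step truly depends on the hypothesis that $X$ is an implicative-orthomodular lattice.
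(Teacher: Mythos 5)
Your proof is correct and follows essentially the same route as the paper's: Lemma \ref{iol-30-20}(3) gives $x\le_L y\ra x$ for the implication $(F_1)\Rightarrow(F_1')$, and Theorem \ref{dioml-05-50} supplies $y=y\Cup x=(y\ra x)\ra x$ so that $(F_1')$ applies in the converse. Your added remark that only the converse direction uses orthomodularity is accurate and consistent with the paper's argument.
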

\begin{proof} 
Assume that $F$ satisfies $(F_1)$, and let $x\in F$, $y\in X$. 
By Lemma \ref{iol-30-20}$(3)$, we have $x\le_L y\ra x$, and by $(F_1)$ we get $y\ra x\in F$, that is $(F_1^{'})$. 
Conversely, let $F$ satisfying $(F_1)$, and let $x\in F$, $y\in X$ such that $x\le_L y$. 
According to Theorem \ref{dioml-05-50} and applying $(F_1^{'})$, we have $y=y\Cup x=(y\ra x)\ra x\in F$, 
so that condition $(F_1)$ is verified. 
\end{proof}

\begin{definition} \label{dsioml-20} \emph{
Let $X$ be an implicative involutive BE algebra, and let $F\subseteq X$. 
Then $F$ is called a \emph{deductive system} (DS for short), if it satisfies the following conditions: \\
$(DS_1)$ $1\in F;$ \\
$(DS_2)$ $x, x\ra y\in F$ implies $y\in F$. 
}
\end{definition}

Denote by $\mathcal{DS}$, $\mathcal{DS}_o$, $\mathcal{DS}_q$, $\mathcal{DS}_p$ the set of all 
deductive systems, order-deductive systems, q-deductive systems and p-deductive systems of $X$, respectively. 
One can easily check that $\mathcal{DS}(X)=\mathcal{DS}_p(X)\subseteq \mathcal{DS}_q(X)$.   

\begin{proposition} \label{dsioml-30} 
Let $X$ be an implicative involutive BE algebra, and let $F\subseteq X$. The following are equivalent: \\
$(a)$ $F\in \mathcal{DS}(X);$ \\
$(b)$ $F$ is nonempty satisfying $(F_2)$ and \\
$(F_4)$ $x\in F$, $y\in X$, $x\le y$ imply $y\in F;$ \\
$(c)$ $F$ is nonempty satisfying $(F_2)$ and $(F_3)$. 
\end{proposition}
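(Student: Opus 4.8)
The plan is to prove the three statements equivalent by the cycle $(a)\Ri(b)\Ri(c)\Ri(a)$, relying only on the basic identities of Lemma \ref{qbe-10}. The two tools I would use repeatedly are the order-reversal rule $x\le y \Leftrightarrow y^*\le x^*$ (immediate from Lemma \ref{qbe-10}$(6)$, since $y^*\ra x^*=x\ra y$) and the elementary inequality $x\le (x\ra y)\ra y$ from Lemma \ref{qbe-10}$(2)$. Everything reduces to combining upward closure with a single well-chosen instance of modus ponens.

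For $(a)\Ri(b)$, note first that $1\in F$ makes $F$ nonempty, and condition $(F_4)$ is immediate: if $x\in F$ and $x\le y$ then $x\ra y=1\in F$ by $(DS_1)$, so $y\in F$ by $(DS_2)$. The substantive point is $(F_2)$. Given $x,y\in F$, I would rewrite $x\ra (x\ra y^*)^*$; applying Lemma \ref{qbe-10}$(3)$ twice collapses it to $(y\ra x^*)\ra x^*$, and Lemma \ref{qbe-10}$(2)$ gives $y\le (y\ra x^*)\ra x^*$. Hence $x\ra(x\ra y^*)^*\in F$ by $(F_4)$, and modus ponens $(DS_2)$ with $x\in F$ yields $(x\ra y^*)^*\in F$.

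For $(b)\Ri(c)$, nonemptiness and $(F_2)$ are inherited, while $(F_3)$ follows at once: for $x\in F$ and any $y$, Lemma \ref{qbe-10}$(2)$ gives $x\le (x\ra y)\ra y=x\Cup y$, so $x\Cup y\in F$ by $(F_4)$. The closing implication $(c)\Ri(a)$ is where the remaining work sits. I would first recover $(F_4)$ from $(F_3)$: if $x\in F$ and $x\le y$ then $x\ra y=1$, so $x\Cup y=(x\ra y)\ra y=1\ra y=y\in F$. Then $(DS_1)$ is clear by taking $y=1$ in $(F_3)$, since $x\Cup 1=1$. For $(DS_2)$, suppose $x, x\ra y\in F$; applying $(F_2)$ to these two elements gives $\bigl((x\ra y)\ra x^*\bigr)^*\in F$, and the key inequality $\bigl((x\ra y)\ra x^*\bigr)^*\le y$ — equivalently $y^*\le (x\ra y)\ra x^*$, which I would verify via $(BE_4)$ and Lemma \ref{qbe-10}$(6)$ reducing the implication to $(x\ra y)\ra (x\ra y)=1$ — then lets $(F_4)$ conclude $y\in F$.

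I expect the main obstacle to be the derivation of $(F_2)$ inside $(a)\Ri(b)$: unlike the upward-closure steps, it is not a direct order comparison but needs the identity $x\ra(x\ra y^*)^*=(y\ra x^*)\ra x^*$ in order to manufacture the correct instance of modus ponens from $x$ and $y$. Once that computation is secured, every other step is a one-line application of upward closure or of the involutive order-reversal, and the cycle closes cleanly.
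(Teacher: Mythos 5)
Your proof is correct and follows essentially the same route as the paper: upward closure plus modus ponens, with $(F_2)$ obtained from the identity $x\ra(y\ra(x\ra y^*)^*)=1$ (your version via Lemma \ref{qbe-10}(2),(3) is the same fact, reached through $(BE_4)$, as the paper's use of Lemma \ref{qbe-10}(8)), and $(DS_2)$ recovered from $(F_2)$ and $(F_4)$ via the inequality $(x\ra(x\ra y)^*)^*\le y$, which is literally your element $((x\ra y)\ra x^*)^*$. The only organizational difference — a single cycle $(a)\Ri(b)\Ri(c)\Ri(a)$ instead of the paper's two separate equivalences — is immaterial, and your explicit check of $(DS_1)$ via $x\Cup 1=1$ is a small point the paper leaves implicit.
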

\begin{proof}
$(a)\Rightarrow (b)$: Let $F\in \mathcal{DS}(X)$, and let $x,y\in F$. 
By $(DS_1)$ we have $1\in F$, so that $F$ is nonempty. 
Applying Lemma \ref{qbe-10}$(8)$ for $z:=(x\ra y^*)^*$, we get 
$x\ra (y\ra (x\ra y^*)^*)=(x\ra y^*)^*\ra (x\ra y^*)^*=1\in F$. 
Since $x,y\in F$, it follows that $(x\ra y^*)^*\in F$, hence $F$ satisfies condition $(F_2)$. 
Consider $x\in F$, $y\in X$ such that $x\le y$, that is $x\ra y=1\in F$. 
Since $x\in F$, by $(DS_2)$, we get $y\in F$, that is $F$ satisfies condition $(F_4)$. \\
$(b)\Rightarrow (a)$: Let $F\subseteq X$ nonempty satisfying $(F_2)$ and $(F_4)$. 
Let $x,y\in X$ such that $x,x\ra y\in F$, so that, by $(F_2)$, $(x\ra (x\ra y)^*)^*\in F$. 
We have: \\
$\hspace*{1.50cm}$ $(x\ra (x\ra y)^*)^*\ra y=y^*\ra (x\ra (x\ra y)^*)=x\ra (y^*\ra (x\ra y)^*)$ \\
$\hspace*{5.00cm}$ $=x\ra ((x\ra y)\ra y)=(x\ra y)\ra (x\ra y)=1$. \\
Hence $(x\ra (x\ra y)^*)^*\le y$, and by $(F_4)$, we get $y\in F$. It follows that $F\in \mathcal{DS}(X)$. \\   
$(b)\Rightarrow (c)$: Let $x\in F$ and $y\in X$. 
We have $x\ra (x\Cup y)=x\ra ((x\ra y)\ra y)=(x\ra y)\ra (x\ra y)=1$, hence $x\le x\Cup y$. 
By $(F_4)$ we get $x\Cup y\in F$, that is $(F_3)$ is verified. \\
$(c)\Rightarrow (b)$: If $x\in F$, $y\in X$ such that $x\le y$, then by $(F_3)$, $y=1\ra y=(x\ra y)\ra y=x\Cup y\in F$. 
Hence $F$ satisfies condition $(F_4)$. 
\end{proof}

Let $X$ be an implicative involutive BE algebra, and let $F\in \mathcal{DS}_o(X)$. Consider the following properties: \\
$(P_1)$ $x\in F$, $y\in X$, $y\le_L x$ imply $x\Cup y\in F;$ \\
$(P_2)$ $x\in F$, $y\in X$ imply $x\Cup y\in F$. \\

We prove the following results using an idea from \cite{Pulm1}. 

\begin{theorem} \label{dsioml-40} Let $X$ be an implicative involutive BE algebra. Then: \\
$(1)$ $X$ is an implicative-orthomodular lattice iff every $F\in \mathcal{DS}_o(X)$ satisfies property $(P_1);$ \\
$(2)$ $X$ is an implicative-Boolean algebra iff every $F\in \mathcal{DS}_o(X)$ satisfies property $(P_2)$. 
\end{theorem}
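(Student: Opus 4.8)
The plan is to prove both equivalences by leaning on the two structural characterizations already available: Theorem~\ref{dioml-05-50}(c), that $X$ is an implicative-orthomodular lattice iff $x\le_L y\Ri y=y\Cup x$, and the defining commutativity condition $x\Cup y=y\Cup x$ for implicative-Boolean algebras. In each part the forward implication is a direct reading of that structure, while the substantive direction is the converse, which I would obtain by feeding one well-chosen order deductive system into $(P_1)$ or $(P_2)$. The convenient o-DS is the principal $\le_L$-up-set $F_y:=\{z\in X:y\le_L z\}$; since $\le_L$ is a partial order (Proposition~\ref{qbe-20}(7)), transitivity makes $F_y$ an o-DS, and $y\in F_y$.

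For part $(1)$, the forward implication is immediate: if $X$ is an IOML, $F\in\mathcal{DS}_o(X)$, $x\in F$ and $y\le_L x$, then Theorem~\ref{dioml-05-50}(c) gives $x\Cup y=x\in F$, so $(P_1)$ holds. For the converse I would fix $x\le_L y$ and apply $(P_1)$ to $F_y$: since $y\in F_y$ and $x\le_L y$, we get $y\Cup x\in F_y$, that is $y\le_L y\Cup x$. The decisive step is the reverse inequality $y\Cup x\le_L y$. Here I would first rewrite $\le_L$ via Lemma~\ref{iol-30-20}(1): $u\le_L v$ is equivalent to $v=v^*\ra u$. Thus $x\le_L y$ reads $y=y^*\ra x$, and then, by $(BE_4)$ followed by $(Impl)$,
\[
y^*\ra (y\Cup x)=y^*\ra\big((y\ra x)\ra x\big)=(y\ra x)\ra (y^*\ra x)=(y\ra x)\ra y=y,
\]
which is exactly $y\Cup x\le_L y$. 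Antisymmetry of $\le_L$ now yields $y=y\Cup x$, and Theorem~\ref{dioml-05-50}(c) gives that $X$ is an IOML.

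For part $(2)$, the forward implication uses commutativity: if $X$ is implicative-Boolean, then from Proposition~\ref{ioml-50}(7) (in the form $x\le_Q y\Cup x$) together with $y\Cup x=x\Cup y$ and $\le_Q=\le_L$ we obtain $x\le_L x\Cup y$, whence $(F_1)$ forces $x\Cup y\in F$, that is $(P_2)$. For the converse, note first that $(P_2)$ specializes to $(P_1)$, so $X$ is an IOML by part $(1)$. Applying $(P_2)$ to $F_a$ (with $a\in F_a$ and arbitrary $b$) gives the general fact $a\le_L a\Cup b$. I would then establish a ``$\le_L$-join'' lemma: if $a\le_L c$ and $b\le_L c$, then $a\Cup b\le_L c$. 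Reformulating the hypotheses as $c\ra a^*=a^*$ and $c=c^*\ra b$, the same mechanism as above applies: by $(BE_4)$ and Lemma~\ref{qbe-10}(6), $c\ra(a\ra b)=c\ra(b^*\ra a^*)=b^*\ra(c\ra a^*)=b^*\ra a^*=a\ra b$, so $(a\ra b)\ra c=c$ by $(Impl)$, and hence $c^*\ra(a\Cup b)=(a\ra b)\ra(c^*\ra b)=(a\ra b)\ra c=c$, that is $a\Cup b\le_L c$. Taking $c=b\Cup a$ (legitimate since $a\le_L b\Cup a$ by Proposition~\ref{ioml-50}(7) and $b\le_L b\Cup a$ by the general fact noted above) yields $a\Cup b\le_L b\Cup a$; by symmetry $b\Cup a\le_L a\Cup b$, and antisymmetry of $\le_L$ gives $a\Cup b=b\Cup a$, so $X$ is implicative-Boolean.

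I expect the main obstacle to be the two one-sided-to-two-sided upgrades: the order deductive system hypothesis only ever delivers a single $\le_L$-inequality ($y\le_L y\Cup x$ in part $(1)$, $a\le_L a\Cup b$ in part $(2)$), and the real work is producing the matching reverse inequality purely from the axioms. The enabling device in both cases is the reformulation $u\le_L v\Leftrightarrow v=v^*\ra u$ coming from Lemma~\ref{iol-30-20}(1), which turns each goal into an identity that collapses under $(BE_4)$ and the implicative law $(Impl)$. It is essential that equality is then extracted from antisymmetry of $\le_L$ (Proposition~\ref{qbe-20}(7)), since the underlying relation $\le$ on an implicative involutive BE algebra need not be antisymmetric, so no argument via $\le$ alone could close the gap.
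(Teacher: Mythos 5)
Your proof is correct, and while its skeleton coincides with the paper's (the forward implications are the same one-line applications of Theorem \ref{dioml-05-50} and of commutativity, and both converses feed the principal up-set $[x,1]=\{z\in X\mid x\le_L z\}$ into $(P_1)$ or $(P_2)$), the way you close the two converses is genuinely different, and in part $(1)$ this matters. To finish $(1)$, the paper computes $x^*=x\ra (x\Cup y)^*=x\ra (x^*\Cap y^*)=x\ra ((x^*\ra y^*)\ra y^*)^*=x\ra y$ and concludes $x\Cup y=(x\ra y)\ra y=x^*\ra y=x$; but $((x^*\ra y^*)\ra y^*)^*$ is by definition $x\Cap y$, not $x^*\Cap y^*$ (which equals $((x\ra y)\ra y)^*$), and the intermediate identity $x^*=x\ra y$ is in fact false under the stated hypotheses (in a Boolean algebra take $x=y=1$: then $y\le_L x$ and $x\le_L x\Cup y$ hold, yet $x^*=0\neq 1=x\ra y$). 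Nor can that gap be patched by appealing to $(QW_1)$, since in the converse direction $X$ is not yet known to be an IOML. Your closing argument --- derive $y\Cup x\le_L y$ from $x\le_L y$ alone via the reformulation $u\le_L v\Leftrightarrow v=v^*\ra u$ (Lemma \ref{iol-30-20}(1)), $(BE_4)$ and $(Impl)$, then conclude by antisymmetry of $\le_L$ --- avoids this entirely, so it is not merely a variant but a repair of the paper's proof of $(1)$. In part $(2)$ the difference is one of economy: after reducing to the IOML case, the paper finishes in a few lines using $(QW_1)$ (via Theorem \ref{ioml-90-10}) together with $x\Cup y=(x\Cup y)\Cup x$ from Theorem \ref{dioml-05-50}, whereas you prove from scratch the upper-bound lemma ($a\le_L c$ and $b\le_L c$ imply $a\Cup b\le_L c$) and close with a double inequality and antisymmetry. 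The paper's route to $(2)$ is shorter given its imported machinery; yours is more elementary and uniform across both parts, stays entirely inside $(BE_4)$, $(Impl)$, Lemma \ref{qbe-10} and the order properties of $\le_L$, and yields the least-upper-bound property of $\Cup$ as a reusable byproduct.
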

\begin{proof}
$(1)$ Suppose $X$ is an IOML, $F\in \mathcal{DS}_o(X)$, and let $x\in F$, $y\in X$ such that $y\le_L x$. 
Since $X$ is an IOML, by Theorem \ref{dioml-05-50} we have $x\Cup y=x\in F$, hence $F$ satisfies $(P_1)$. 
Conversely, assume that every $F\in \mathcal{DS}_o(X)$ satisfies property $(P_1)$. 
Let $x,y\in X$ such that $y\le_L x$ and consider $F=[x,1]=\{z\in X\mid x\le_L z\}$. 
It follows that $F\in \mathcal{DS}_o(X)$ and $x\in F$, so that, by $(P_1)$, $x\Cup y\in F$, hence $x\le_L x\Cup y$. 
From $y\le_L x$ we get $x^*\le_L y^*$, so that $x^*\le y^*$, thus $x^*\ra y^*=1$. 
Moreover, $x^*\le_L y^*$ implies $x^*=(x^*\ra y)^*$, that is $x=x^*\ra y$. 
Since $x\le_L x\Cup y$, we have 
$x^*=x\ra (x\Cup y)^*=x\ra (x^*\Cap y^*)=x\ra ((x^*\ra y^*)\ra y^*)^*=x\ra (y^*)^*=x\ra y$. 
It follows that $x\Cup y=(x\ra y)\ra y=x^*\ra y=x$, hence, by Theorem \ref{dioml-05-50}, $X$ is an IOML. \\
$(2)$ Let $X$ be an implicative-Boolean algebra, let $F\in \mathcal{DS}_o(X)$, and let $x\in F$, $y\in X$. 
Then $x\le_L y\Cup x=x\Cup y$, hence $x\Cup y\in F$, so that $F$ satisfies property $(P_2)$. 
Conversely, let $X$ be an implicative involutive BE algebra and assume that every $F\in \mathcal{DS}_o(X)$ 
verifies property $(P_2)$. Let $x\in F$, $y\in X$ and consider $F=[x,1]\in \mathcal{DS}_o(X)$. 
By $(P_2)$, $x\Cup y\in F$, hence $x\le_L x\Cup y$. 
Since $(P_2)$ implies $(P_1)$, then by $(1)$, $X$ is an IOML, and by Theorem \ref{ioml-90-10}, $X$ satisfies 
property $(QW_1)$. 
Since by Theorem \ref{dioml-05-50}, $x\le_L x\Cup y$ implies $x\Cup y=(x\Cup y)\Cup x$, using property $(QW_1)$, we get: \\
$\hspace*{2.00cm}$ $x\Cup y=((x\Cup y)\ra x)\ra x=(x^*\ra (x\Cup y)^*)\ra x$ \\
$\hspace*{3.00cm}$ $=(x^*\ra (x^*\Cap y^*))\ra x=(x^*\ra y^*)\ra x$ \\
$\hspace*{3.00cm}$ $=(y\ra x)\ra x=y\Cup x$. \\
Hence $X$ is an implicative-Boolean algebra.  
\end{proof}

\begin{theorem} \label{dsioml-50} Let $X$ be an implicative involutive BE algebra. Then: \\
$(1)$ $X$ is an implicative-orthomodular lattice iff for any $x,y\in X$, $y\nleqslant_L x$, there exists 
$F\in \mathcal{DS}_o(X)$ satisfying $(P_1)$ such that $y\in F$, $x\notin F;$ \\
$(2)$ $X$ is an implicative-Boolean algebra iff for any $x,y\in X$, $y\nleqslant_L x$, there exists 
$F\in \mathcal{DS}_o(X)$ satisfying $(P_2)$ such that $y\in F$, $x\notin F$. 
\end{theorem}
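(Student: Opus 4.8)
The plan is to prove both equivalences by reducing to the characterizations already established in Theorems \ref{dsioml-40} and \ref{dioml-05-50}, using principal order deductive systems as the separating family. For the forward implication of $(1)$, given $x,y$ with $y\nleqslant_L x$, I would take $F=[y,1]=\{z\in X\mid y\le_L z\}$. Since $\le_L$ is an order relation (Proposition \ref{qbe-20}$(7)$), $F$ is an order deductive system with $y\in F$ and $x\notin F$; and because $X$ is an implicative-orthomodular lattice, Theorem \ref{dsioml-40}$(1)$ guarantees that every member of $\mathcal{DS}_o(X)$---in particular $F$---satisfies $(P_1)$. The very same $F$ handles the forward implication of $(2)$, now invoking Theorem \ref{dsioml-40}$(2)$ to see that $F$ satisfies $(P_2)$.

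For the converse of $(1)$ I would verify the criterion of Theorem \ref{dioml-05-50}$(c)$: for every pair with $a\le_L b$ one must show $b=b\Cup a$. The key step is to establish $b\le_L b\Cup a$, and here the separation hypothesis does the work. If $b\nleqslant_L b\Cup a$, then there is some $F\in\mathcal{DS}_o(X)$ satisfying $(P_1)$ with $b\in F$ and $b\Cup a\notin F$; but applying $(P_1)$ to $b\in F$ and $a\in X$ with $a\le_L b$ forces $b\Cup a\in F$, a contradiction. Once $b\le_L b\Cup a$ is in hand, the situation matches exactly the one in the converse of Theorem \ref{dsioml-40}$(1)$ (we have both $a\le_L b$ and $b\le_L b\Cup a$), so the same computation yields $b\Cup a=b$, whence $X$ is an implicative-orthomodular lattice.

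The converse of $(2)$ proceeds in two stages. First, since any $F$ satisfying $(P_2)$ also satisfies $(P_1)$, the $(P_2)$-separation hypothesis is a fortiori a $(P_1)$-separation hypothesis, so part $(1)$ already gives that $X$ is an implicative-orthomodular lattice, and hence satisfies $(QW_1)$ by Theorem \ref{ioml-90-10}. Second, for arbitrary $x,y\in X$ I would show $x\le_L x\Cup y$ by the same contradiction device: were it false, separation would produce $F$ satisfying $(P_2)$ with $x\in F$ but $x\Cup y\notin F$, contradicting $(P_2)$ applied to $x\in F$ and $y\in X$. Then $x\le_L x\Cup y$ together with Theorem \ref{dioml-05-50} gives $x\Cup y=(x\Cup y)\Cup x$, and the $(QW_1)$-computation from the converse of Theorem \ref{dsioml-40}$(2)$ reduces this to $x\Cup y=y\Cup x$, so $X$ is an implicative-Boolean algebra.

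The main obstacle is the backward directions, and specifically the realization that the separation property must be used contrapositively: rather than attempting to prove that \emph{every} order deductive system satisfies $(P_1)$ (which is what Theorem \ref{dsioml-40} needs but which the hypothesis does not directly supply), one isolates the single comparison $b\le_L b\Cup a$ (respectively $x\le_L x\Cup y$) by assuming it fails, extracting a separating $F$ that \emph{does} satisfy $(P_1)$ (respectively $(P_2)$), and then deriving the contradiction immediately from that defining property. After this comparison is secured, the remaining algebra is precisely the content of the converses in Theorem \ref{dsioml-40} and requires no new computation.
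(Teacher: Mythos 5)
Your proof is correct, and while it shares the paper's overall architecture (forward direction via the principal intervals $[y,1]$ together with Theorem \ref{dsioml-40}; converse via the inequality $b\le_L b\Cup a$ followed by the same algebra and the criterion of Theorem \ref{dioml-05-50}), the mechanism you use at the crux of the converse is genuinely different. The paper introduces $F_y$, the smallest order deductive system with property $(P_1)$ containing $y$ (implicitly using that $(P_1)$-systems are closed under intersection, so that $F_y$ exists), and argues from the separation hypothesis that $F_y=[y,1]$; this converts the hypothesis into the statement that every principal interval satisfies $(P_1)$, after which the computation of Theorem \ref{dsioml-40}$(1)$ applies verbatim with $F=[b,1]$. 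You never establish that $[b,1]$ satisfies $(P_1)$ at all: you apply the separation hypothesis contrapositively to the single pair $(b,\,b\Cup a)$, and the contradiction comes from the separating system's own property $(P_1)$ (respectively $(P_2)$ in part $(2)$). Your route is more economical, and it also sidesteps a compression in the paper's step ``from $y\nleqslant_L x$ we have $x\notin F_y$, hence $F_y=[y,1]$,'' which as written invokes only the given $x$, whereas identifying $F_y$ with $[y,1]$ really requires applying separation to every $z$ with $y\nleqslant_L z$. What the paper's approach buys in exchange is a slightly stronger intermediate fact---under the separation hypothesis each interval $[y,1]$ itself satisfies $(P_1)$ (resp.\ $(P_2)$)---which makes the reduction to Theorem \ref{dsioml-40} literal rather than ``the same computation rerun in a new setting.'' Both arguments finish with identical algebra, and both are valid.
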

\begin{proof}
$(1)$ If $X$ is an IOML, then for any $x,y\in X$, $y\nleqslant_L x$, the interval $[y,1]$ satisfies the 
required conditions. 
Conversely, let $x,y\in X$, $y\nleqslant_L x$, and let $F_y$ be the smallest order deductive system with 
property $(P_1)$ containing $y$. 
It follows that for every $F\in \mathcal{DS}_o(X)$ with property $(P_1)$ containing $y$, we have $F_y\subseteq F$. 
From $y\nleqslant_L x$ we have $x\notin F_y$, hence $F_y=[y,1]$. 
Since $F_y$ has property $(P_1)$, similarly as in the proof of Theorem \ref{dsioml-40}$(1)$, $X$ is an IOML. \\
$(2)$ If $X$ is an implicative-Boolean algebra, then for any $x,y\in X$, $y\nleqslant_L x$, the interval $[y,1]$ satisfies the required conditions. 
Conversely, similarly as in the case $(1)$, $[y,1]$ satisfies condition $(P_2)$. 
Following the proof of Theorem \ref{dsioml-40}$(2)$, $X$ is an implicative-Boolean algebra. 
\end{proof}

$\vspace*{1mm}$

\section{Types of states on implicative-orthomodular lattices} 

In this section, we study in the case of implicative-orthomodular lattices certain notions and results presented by 
S. Pulmannov\'a in \cite{Pulm1} for orthomodular lattices. 
We define various type of states on implicative involutive BE algebras (Jauch-Piron state, (P)-state, (B)-state, 
subadditive state, valuation), and we investigate the relationships between these states. 
We prove that, in the case of implicative-orthomodular lattices, the (B)-states, subadditive states and valuations coincide. 

\begin{definition} \label{sioml-10} \emph{
A \emph{state} on an implicative involutive BE algebra $X$ is a map $s:X\longrightarrow [0,1]$ satisfying the 
following conditions: \\
$(S_1)$ $s(1)=1;$ \\
$(S_2)$ $s(x\ra y)=s(x^*)+s(y)$, whenever $x,y\in X$ such that $y\le_L x$. 
}
\end{definition}

Denote by $\mathcal{S}(X)$ the set of all states on $X$. 

\begin{lemma} \label{sioml-20} Let $X$ be an implicative involutive BE algebra, and let $s\in \mathcal{S}(X)$. 
The following hold, for $x,y\in X$: \\
$(1)$ $s(0)=0;$ \\
$(2)$ $s(x^*)=1-s(x);$ \\
$(3)$ $y\le_L x$ implies $s(y)\le s(x)$. 
\end{lemma}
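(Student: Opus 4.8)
The plan is to derive all three facts purely from the two state axioms $(S_1)$ and $(S_2)$, combined with the order-theoretic facts about $\le_L$ recorded in the Preliminaries; no further structural assumption on $X$ is needed.

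For $(1)$, I would exploit that $\le_L$ is reflexive, since by Proposition \ref{qbe-20}$(7)$ it is an order relation on $X$; in particular $0\le_L 0$. Feeding $x:=0$, $y:=0$ into $(S_2)$ gives $s(0\ra 0)=s(0^*)+s(0)$. As $0\ra 0=1$ by $(BE_1)$ and $0^*=0\ra 0=1$, this reads $s(1)=s(1)+s(0)$, and $(S_1)$ forces $s(0)=0$. (Equivalently, one may take $y:=0$ with $x$ arbitrary, using $0\le_L x$ together with the identity $x\ra 0=x^*$.)

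For $(2)$, I again use reflexivity, so that $x\le_L x$ for every $x\in X$. Applying $(S_2)$ with $y:=x$ yields $s(x\ra x)=s(x^*)+s(x)$. Since $x\ra x=1$ by $(BE_1)$, the left-hand side equals $s(1)=1$ by $(S_1)$, whence $s(x^*)+s(x)=1$, that is, $s(x^*)=1-s(x)$. For $(3)$, I suppose $y\le_L x$, so that $(S_2)$ applies directly and gives $s(x\ra y)=s(x^*)+s(y)$. Because $s$ takes values in $[0,1]$, the left-hand side is at most $1$, and therefore $s(x^*)+s(y)\le 1$. Substituting $s(x^*)=1-s(x)$ from part $(2)$ and rearranging yields $s(y)\le s(x)$.

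I expect essentially no obstacle here: the argument is a short chain of substitutions into $(S_2)$. The only points requiring care are the two elementary identities $x\ra x=1$ and $0^*=1$, both immediate from $(BE_1)$ and the definition of $^*$, and the reflexivity of $\le_L$, which is exactly Proposition \ref{qbe-20}$(7)$. The monotonicity in $(3)$ is the one step that genuinely uses more than the equational content of $(S_2)$, namely the boundedness of the codomain: it is the inequality $s(x\ra y)\le 1$ that converts the additive identity into the desired order relation.
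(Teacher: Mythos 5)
Your proof is correct and follows essentially the same route as the paper's: the paper likewise obtains $(1)$ and $(2)$ by substituting $y:=0$ and $y:=x$ into $(S_2)$, and $(3)$ from $s(y)-s(x)=s(x\ra y)-1\le 0$, which is exactly your use of $s(x\ra y)\le 1$ together with part $(2)$. Your variant for $(1)$ (taking $x=y=0$ via reflexivity rather than $0\le_L x$ with $x$ arbitrary) is an immaterial difference, and you even note the paper's version as the parenthetical alternative.
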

\begin{proof}
$(1)$ Take $y:=0$ in $(S_2)$. \\
$(2)$ Take $y:=x$ in $(S_2)$. \\
$(3)$ By $(S_2)$, $s(y)-s(x)=s(x\ra y)-1\le 0$, hence $s(y)\le s(x)$. 
\end{proof}

We define various types of states on implicative involutive BE algebras. 

\begin{definition} \label{sioml-30} \emph{ 
Let $X$ be an implicative involutive BE algebra, and let $s\in \mathcal{S}(X)$. Then $s$ is called: \\
$(T_1)$ a \emph{Jauch-Piron state}, if $x,y\in X$ such that $s(x)=1$ and $s(y)=0$ implies $s(x\ra y)=0;$ \\
$(T_2)$ a \emph{(P)-state}, if $x,y\in X$ such that $s(x)=1$ implies $s(x\ra y)=s(y);$ \\
$(T_3)$ a \emph{(B)-state}, if $x,y\in X$ such that $x\le y^*$ implies $s(x^*\ra y)=s(x)+s(y);$ \\
$(T_4)$ \emph{subadditive}, if $x,y\in X$ implies $s(x\ra y)\le s(x^*)+s(y);$ \\
$(T_5)$ a \emph{valuation}, if $x,y\in X$ implies $s(x\ra y)-s(y\ra x)=s(y)-s(x)$.  
}
\end{definition}

\begin{proposition} \label{sioml-40} Let $X$ be an implicative involutive BE algebra, and let $s\in \mathcal{S}(X)$.  Then the following hold: \\
$(T_5)\Rightarrow (T_4)\Rightarrow (T_2)\Rightarrow (T_1)$, $(T_5)\Rightarrow (T_3)$. 
\end{proposition}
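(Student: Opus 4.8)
The plan is to verify the four implications one at a time, drawing throughout on the elementary identities $s(0)=0$ and $s(x^*)=1-s(x)$ and the $\le_L$-monotonicity recorded in Lemma \ref{sioml-20}, together with the BE-identity $x\ra(y\ra x)=1$ from Lemma \ref{qbe-10}$(1)$.

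I would dispatch the two implications emanating from $(T_5)$ first, since each is a one-line manipulation of the valuation identity. For $(T_5)\Rightarrow(T_4)$, I rewrite $s(x\ra y)=s(y\ra x)+s(y)-s(x)$ and bound $s(y\ra x)\le 1$; since $1-s(x)=s(x^*)$ by Lemma \ref{sioml-20}$(2)$, this yields exactly $s(x\ra y)\le s(x^*)+s(y)$. For $(T_5)\Rightarrow(T_3)$, I assume $x\le y^*$; by Lemma \ref{qbe-10}$(3)$ this is the same as $y\le x^*$, so $s(y\ra x^*)=s(1)=1$. Feeding the pair $(x^*,y)$ into the valuation identity and again using $s(x^*)=1-s(x)$ collapses it to $s(x^*\ra y)=s(x)+s(y)$.

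The one step that needs a genuine (though still short) argument is $(T_4)\Rightarrow(T_2)$, and I expect it to be the main obstacle, because subadditivity only supplies the inequality $s(x\ra y)\le s(x^*)+s(y)$, whereas $(T_2)$ is an equality. Here I would first observe that subadditivity forces $\le$-monotonicity: if $a\le b$ then $s(a\ra b)=s(1)=1\le s(a^*)+s(b)=1-s(a)+s(b)$, whence $s(a)\le s(b)$. Assuming now $s(x)=1$, the hypothesis gives the upper bound $s(x\ra y)\le s(x^*)+s(y)=s(y)$, while the universally valid relation $y\le x\ra y$ (Lemma \ref{qbe-10}$(1)$, with the variables renamed) together with the just-established monotonicity gives the matching lower bound $s(y)\le s(x\ra y)$. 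The two bounds force $s(x\ra y)=s(y)$, that is, $(T_2)$.

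Finally $(T_2)\Rightarrow(T_1)$ is immediate: if $s(x)=1$ and $s(y)=0$, then $(T_2)$ already yields $s(x\ra y)=s(y)=0$, which is precisely the Jauch-Piron condition. Thus the delicate point is entirely concentrated in recovering the reverse inequality in $(T_4)\Rightarrow(T_2)$ from monotonicity; the remaining implications are formal consequences of the defining identities and the normalization $s(x^*)=1-s(x)$.
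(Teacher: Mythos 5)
Your proof is correct, and three of the four implications ($(T_5)\Rightarrow(T_4)$, $(T_5)\Rightarrow(T_3)$, $(T_2)\Rightarrow(T_1)$) coincide with the paper's argument essentially line by line. The one place you diverge is the step you yourself flag as delicate, $(T_4)\Rightarrow(T_2)$. The paper's route stays inside the $\le_L$/state-axiom framework: it invokes $y\le_L x\ra y$ (Lemma \ref{iol-30-20}$(3)$, which requires the implicative hypothesis) to apply $(S_2)$ and compute $s((x\ra y)\ra y)=1-s(x\ra y)+s(y)$, then uses subadditivity to force this quantity up to $1$, from which $s(x\ra y)=s(y)$ falls out. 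Your route instead first extracts $\le$-monotonicity of $s$ directly from subadditivity (if $a\le b$ then $1=s(a\ra b)\le 1-s(a)+s(b)$), and then sandwiches: the upper bound $s(x\ra y)\le s(x^*)+s(y)=s(y)$ from subadditivity with $s(x^*)=0$, and the lower bound $s(y)\le s(x\ra y)$ from the plain BE inequality $y\le x\ra y$ (Lemma \ref{qbe-10}$(1)$). This is a genuine, if modest, difference: your argument never touches the relation $\le_L$ or the state axiom $(S_2)$ in this step, and in particular it does not use implicativity there, so it would go through verbatim in any bounded involutive BE algebra; what the paper's version buys is uniformity, since evaluating $s(x\Cup y)=s((x\ra y)\ra y)$ via $(S_2)$ along $y\le_L x\ra y$ is exactly the computation reused later in Propositions \ref{sioml-100} and \ref{sioml-120}.
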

\begin{proof}  Let $s\in \mathcal{S}(X)$. \\
$(T_5)\Rightarrow (T_4)$: Since $s$ is a valuation and $1-s(y\ra x)\ge 0$, we have $s(x\ra y)+1-s(y\ra x)=1-s(x)+s(y)$, 
so that $s(x\ra y)\le s(x^*)+s(y)$. Hence $s$ is subadditive. \\
$(T_4)\Rightarrow (T_2)$: Let $x,y\in X$ such that $s(x)=1$. Since $y\le_L x\ra y$ and $s$ is a subadditive state, we have 
$s((x\ra y)\ra y)=1-s(x\ra y)+s(y)\ge 1-1+s(x)-s(y)+s(y)=s(x)=1$. 
Hence $s((x\ra y)\ra y)=1$, that is $1-s(x\ra y)+s(y)=1$, so that $s(x\ra y)=s(y)$. Thus $s$ is a (P)-state. \\
$(T_2)\Rightarrow (T_1)$: It is obvious. \\
$(T_5)\Rightarrow (T_3)$: Let $x,y\in X$ such that $x\le y^*$. Since $s$ is a valuation and $x\le y^*$ 
implies $y\ra x^*=x\ra y^*=1$, from $s(x^*\ra y)-s(y\ra x^*)=s(y)-s(x^*)$, we get $s(x^*\ra y)=s(x)+s(y)$. 
Hence $s$ is a (B)-state. 
\end{proof}

\begin{proposition} \label{sioml-50} Let $X$ be an implicative-orthomodular lattice and let $s\in \mathcal{S}(X)$. 
Then $(T_4)\Rightarrow (T_3)$. 
\end{proposition}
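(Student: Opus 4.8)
The plan is to prove both inequalities $s(x^*\ra y)\le s(x)+s(y)$ and $s(x^*\ra y)\ge s(x)+s(y)$ under the hypothesis $x\le y^*$. The first is immediate and is the only place subadditivity is used directly: applying $(T_4)$ to the pair $(x^*,y)$ and using $x^{**}=x$ gives $s(x^*\ra y)\le s((x^*)^*)+s(y)=s(x)+s(y)$, with no hypothesis on $x,y$. In the term-equivalent orthomodular lattice, writing $x\wedge y:=(x\ra y^*)^*$ and $x\vee y:=x^*\ra y$ for the lattice operations, the hypothesis $x\le y^*$ says exactly $x\wedge y=0$, i.e. $x$ and $y$ are \emph{disjoint}, not necessarily orthogonal. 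Hence the whole content of the proposition is the reverse inequality for disjoint elements, which cannot follow from $(S_2)$ alone, since for a general state the additivity guaranteed by $(S_2)$ holds only on orthogonal pairs.

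For the reverse inequality I would first record that $(S_2)$ yields additivity on orthogonal pairs: if $a\le_L b^*$, then by Lemma~\ref{iol-30-20}$(1)$ also $b\le_L a^*$, so $(S_2)$ applied to $u=a^*$, $v=b$ gives $s(a\vee b)=s(a^*\ra b)=s(a)+s(b)$. Next, using that $X$ is an implicative-orthomodular lattice, I would apply the orthomodular law to the comparable pair $x\le_L x\vee y$ (the relation $x\le_L x^*\ra y$ follows from $(Impl)$): setting $w:=(x\vee y)\wedge x^*=((x^*\ra y)\ra x)^*$, one has $w\le_L x^*$, so $x$ and $w$ are orthogonal, and $x\vee w=x\vee y$. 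Additivity on the orthogonal pair $(x,w)$ then gives $s(x^*\ra y)=s(x\vee y)=s(x)+s(w)$, which reduces the problem to proving $s(w)=s(y)$. Combined with the upper bound already obtained, $s(x)+s(w)=s(x^*\ra y)\le s(x)+s(y)$ forces $s(w)\le s(y)$, so it remains to establish $s(w)\ge s(y)$.

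The main obstacle is precisely this last inequality $s(w)\ge s(y)$. Because $w$ and $y$ are only disjoint and are in general incomparable in the order $\le_L$ (for instance in $MO_2$ one gets $w=x^*$ with $y$ an independent atom), the monotonicity of $s$ from Lemma~\ref{sioml-20}$(3)$ is not enough. My plan is to extract it from subadditivity applied to the symmetric disjoint decomposition obtained from $y$, namely $w_2:=(x\vee y)\wedge y^*$ with $s(x^*\ra y)=s(y)+s(w_2)$, together with subadditivity on the cross pairs built from $x,y,w,w_2$; this is the step that genuinely uses the strength of $(T_4)$, as the $MO_2$ computation shows that subadditivity already forces the values on $w$ and $y$ to coincide. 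An alternative, possibly cleaner route is to upgrade subadditivity to the valuation property $(T_5)$ and then invoke the implication $(T_5)\Rightarrow(T_3)$ already established in Proposition~\ref{sioml-40}; I expect the disjoint-decomposition bookkeeping to be the crux in either approach.
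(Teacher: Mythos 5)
Your reduction is correct as far as it goes, and it in fact reproduces the skeleton of the paper's argument: the upper bound $s(x^*\ra y)\le s(x)+s(y)$ from $(T_4)$, the observation that $x\le y^*$ is disjointness rather than orthogonality, the orthogonal splitting $s(x^*\ra y)=s(x)+s(w)$ with $w=((x^*\ra y)\ra x)^*$, and the reduction to $s(w)\ge s(y)$ are all sound. But the proposal has a genuine gap at exactly the point you yourself flag as ``the main obstacle'': the inequality $s(w)\ge s(y)$ (equivalently the lower bound $s(x)+s(y)\le s(x^*\ra y)$) is never proved. What you offer instead is a list of ingredients --- the symmetric element $w_2$, ``subadditivity on the cross pairs'', or alternatively upgrading $(T_4)$ to $(T_5)$ --- with no argument. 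The last alternative is unsupported here: in the paper $(T_5)$ is derived from $(T_3)$ (Proposition~\ref{sioml-60}), so within this development the implication $(T_4)\Rightarrow(T_5)$ is available only \emph{after} the very proposition you are proving.

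The single missing identity that closes the argument --- and it is precisely what the paper's proof supplies --- is $x^*\ra y=w^*\ra w_2$, i.e.\ in lattice terms $x\vee y=w\vee w_2$, where $w=(x^*\ra(x^*\ra y)^*)^*$ and $w_2=(y^*\ra(x^*\ra y)^*)^*$ are exactly your two elements (Lemma~\ref{qbe-10}$(3)$ identifies them with $((x^*\ra y)\ra x)^*$ and $((x^*\ra y)\ra y)^*$). It comes from the Foulis--Holland theorem: since $(x^*\ra y)^*\le_Q x^*$ and $(x^*\ra y)^*\le_Q y^*$, the element $x^*\ra y$ commutes with both $x^*$ and $y^*$ (Proposition~\ref{cioml-20}), so the triple $(x^*,y^*,x^*\ra y)$ is distributive (Theorem~\ref{cioml-30}); instantiating $(Idis_1)$ at this triple and using the disjointness hypothesis $x\ra y^*=1$ collapses the left-hand side to $x^*\ra y$. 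Once this identity is in hand, subadditivity applied to the implication $w^*\ra w_2$ gives $s(x^*\ra y)\le s(w)+s(w_2)$, and since $(S_2)$ evaluates $s(w)=s(x^*\ra y)-s(x)$ and $s(w_2)=s(x^*\ra y)-s(y)$ (these are your two orthogonal decompositions), this rearranges to $s(x)+s(y)\le s(x^*\ra y)$. So your $w$, $w_2$ and ``subadditivity on a cross pair'' are the right moves, but without the Foulis--Holland identity $w\vee w_2=x\vee y$ --- the step where disjointness actually enters --- the bookkeeping you describe cannot be completed, and as written the proof is incomplete.
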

\begin{proof}
Let $s\in \mathcal{S}(X)$ satisfying $(T_4)$, and let $x,y\in X$ such that $x\le y^*$. \\
From $x\le_Q x^*\ra y$, $y\le_Q x^*\ra y$, we get $(x^*\ra y)^*\le_Q x^*$ and $(x^*\ra y)^*\le_Q y^*$. 
According to Proposition \ref{cioml-20}, $(x^*\ra y)\mathcal{C} x^*$ and $(x^*\ra y)\mathcal{C} y^*$. 
It follows that the triple $(x^*,y^*,x^*\ra y)$ is distributive, and applying the Foulis-Holland theorem for 
$X:=x^*$, $Y:=y^*$, $Z:=x^*\ra y$, we have $((X^*\ra Y)\ra Z^*)^*=(X\ra Z^*)\ra (Y\ra Z^*)^*$, that is: 
$((x\ra y^*)\ra (x^*\ra y)^*)^*=(x^*\ra (x^*\ra y)^*)\ra (y^*\ra (x^*\ra y)^*)^*$. 
Since $x\ra y^*=1$, we get $x^*\ra y=(x^*\ra (x^*\ra y)^*)\ra (y^*\ra (x^*\ra y)^*)^*$. 
Using the subadditivity property of $s$, we get: \\
$\hspace*{2cm}$ $s(x^*\ra y)\le s(x^*\ra (x^*\ra y)^*)^*+s(y^*\ra (x^*\ra y)^*)^*$, so that \\
$\hspace*{2cm}$ $s(x^*\ra y)\le 1-s(x^*\ra (x^*\ra y)^*)+1-s(y^*\ra (x^*\ra y)^*)$. \\
Taking into consideration that $(x^*\ra y)^*\le_Q x^*$ and $(x^*\ra y)^*\le_Q y^*$, we get: \\
$\hspace*{2cm}$ $s(x^*\ra y)\le 1-(s(x)+1-s(x^*\ra y))+1-(s(y)+1-s(x^*\ra y))$, \\
so that $s(x)+s(y)\le s(x^*\ra y)$. On the other hand, by subadditivity, $s(x^*\ra y)\le s(x)+ s(y)$. 
Hence $s(x^*\ra y)=s(x)+s(y)$, that is $(T_3)$. 
\end{proof}

\begin{proposition} \label{sioml-60} Let $X$ be an implicative-orthomodular lattice and let $s\in \mathcal{S}(X)$. 
Then $(T_3)\Rightarrow (T_5)$. 
\end{proposition}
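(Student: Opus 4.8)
The plan is to reduce the valuation identity $(T_5)$ to the single symmetry relation $s(x\Cap y)=s(y\Cap x)$, and then to establish that relation by playing the two ``one-sided'' comparisons $(x\Cap y)\ra(y\Cap x)=1$ and its swap $(y\Cap x)\ra(x\Cap y)=1$ against the additivity that is built into a (B)-state.

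First I would compute $s(x\ra y)$ in a form that already suggests the symmetry. By Proposition \ref{ioml-50}$(1)$ we have $x\ra(y\Cap x)=x\ra y$. Moreover $y\Cap x\le_Q x$ (the relation $x\Cap y\le_Q y\le_Q x\Cup y$ of Proposition \ref{ioml-50}$(7)$, read with $x,y$ interchanged), and $\le_Q$ implies $\le_L$ by Lemma \ref{iol-30-20}$(2)$, so $y\Cap x\le_L x$. Hence $(S_2)$ applies to the pair $x,\,y\Cap x$ and gives $s(x\ra y)=s\big(x\ra(y\Cap x)\big)=s(x^*)+s(y\Cap x)=1-s(x)+s(y\Cap x)$; note this step uses only that $s$ is a state. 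By symmetry $s(y\ra x)=1-s(y)+s(x\Cap y)$, and subtracting yields $s(x\ra y)-s(y\ra x)=\big(s(y)-s(x)\big)+\big(s(y\Cap x)-s(x\Cap y)\big)$. Thus $(T_5)$ holds if and only if $s(x\Cap y)=s(y\Cap x)$.

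Now I would prove this equality, and here is where the (B)-state hypothesis $(T_3)$ enters. Write $p=x\Cap y$ and $q=y\Cap x$. By Proposition \ref{ioml-50}$(7)$ (the identity $(x\Cap y)\ra(y\Cap x)=1$, used directly and then with $x,y$ interchanged) we have both $p\ra q=1$ and $q\ra p=1$. Applying $(T_3)$ to the pair $p,\,q^*$, whose hypothesis $p\le(q^*)^*=q$ is precisely $p\ra q=1$, gives $s(p^*\ra q^*)=s(p)+s(q^*)$. Since $p^*\ra q^*=q\ra p$ by Lemma \ref{qbe-10}$(6)$ and $q\ra p=1$, the left-hand side equals $s(1)=1$; using $s(q^*)=1-s(q)$ from Lemma \ref{sioml-20}$(2)$ this forces $s(p)=s(q)$, that is $s(x\Cap y)=s(y\Cap x)$. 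Combined with the reduction of the previous paragraph, $(T_5)$ follows.

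The hard part is exactly this last equality, and the reason it requires care is that the relation $\le$ induced by $\ra$ is only reflexive, not antisymmetric: from $p\ra q=1$ and $q\ra p=1$ one cannot conclude $p=q$, and one cannot even conclude $s(p)\le s(q)$ directly, since the monotonicity of Lemma \ref{sioml-20}$(3)$ is available only for $\le_L$ and not for the coarse relation $\le$. The device that circumvents this is to convert each inequality $p\ra q=1$ into the additivity statement $s(p^*\ra q^*)=s(p)+s(q^*)$ through $(T_3)$, and then to recognise, via Lemma \ref{qbe-10}$(6)$ together with the companion inequality $q\ra p=1$, that $p^*\ra q^*$ is the unit; the two comparisons acting in tandem pin the two state-values to each other even though the underlying elements need not coincide.
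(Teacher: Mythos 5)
Your proof is correct, and it takes a genuinely different route from the paper's. The paper applies $(T_3)$ to the pair $X:=(x\ra (x\ra y)^*)^*$, $Y:=(y^*\ra (x\ra y)^*)^*$, and then needs the commutation results of Proposition \ref{cioml-20} together with the Foulis--Holland theorem (Theorem \ref{cioml-30}) for the distributive triple $(x,y^*,x\ra y)$ in order to identify $X^*\ra Y$ with $((y\ra x)\ra (x\ra y)^*)^*$, after which a longer chain of $(S_2)$-expansions yields $(T_5)$. You avoid Foulis--Holland entirely: the reduction $s(x\ra y)=1-s(x)+s(y\Cap x)$ (via Proposition \ref{ioml-50}$(1)$, the relation $y\Cap x\le_Q x$, Lemma \ref{iol-30-20}$(2)$ and $(S_2)$) turns $(T_5)$ into the single symmetry statement $s(x\Cap y)=s(y\Cap x)$, and the two one-sided identities $(x\Cap y)\ra(y\Cap x)=1$ and $(y\Cap x)\ra(x\Cap y)=1$ from Proposition \ref{ioml-50}, fed once through $(T_3)$ and Lemma \ref{qbe-10}$(6)$, pin down that equality; your closing remark correctly identifies why this detour is needed, since $\le$ is not antisymmetric and Lemma \ref{sioml-20}$(3)$ only gives monotonicity for $\le_L$. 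What each approach buys: yours is shorter, more elementary in its prerequisites (only stated identities about $\Cap$, $\Cup$ and states), and isolates the exact mechanism by which $(T_3)$ compensates for the failure of antisymmetry; the paper's argument is heavier but exercises the commutation/distributivity machinery that it also relies on in the converse direction (Proposition \ref{sioml-50}), so the two proofs there share a common toolkit. One cosmetic point: Proposition \ref{ioml-50} has two items numbered $(7)$, and you invoke both; since you state each identity explicitly this causes no ambiguity, but in a final write-up you should cite them unambiguously.
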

\begin{proof}
Let $s\in \mathcal{S}(X)$ satisfying $(T_3)$ and let $x,y\in X$. 
Denote $X:=(x\ra (x\ra y)^*)^*$, $Y:=(y^*\ra (x\ra y)^*)^*$, and we have: \\
$\hspace*{2.00cm}$ $X\ra Y^*=(x\ra (x\ra y)^*)^*\ra (y^*\ra (x\ra y)^*)$ \\
$\hspace*{3.50cm}$ $=((x\ra y)\ra x^*)^*\ra ((x\ra y)\ra y)$ \\
$\hspace*{3.50cm}$ $=(x\ra y)\ra (x\ra ((x\ra y)\ra y))$ (by Lemma \ref{qbe-10}$(7)$) \\
$\hspace*{3.50cm}$ $=(x\ra y)\ra ((x\ra y)\ra (x\ra y))=1$. \\
Hence $X\le Y^*$, and by $(T_3)$, $s(X^*\ra Y)=s(X)+s(Y)$. \\
On the other hand, since $x^*\le_Q x\ra y$ implies $(x\ra y)^*\le_Q x$, we have $x\mathcal{C} (x\ra y)^*$, so that 
$x\mathcal{C} (x\ra y)$. Similarly $y\le_Q x\ra y$ implies $y\mathcal{C} (x\ra y)$, hence $y^*\mathcal{C} (x\ra y)$. 
Applying the Foulis-Holland theorem for the distributive triple $(x,y^*,x\ra y)$, we get: \\
$\hspace*{2.00cm}$ $X^*\ra Y=(x\ra (x\ra y)^*)\ra (y^*\ra (x\ra y)^*)^*=((x^*\ra y^*)\ra (x\ra y)^*)^*$ \\
$\hspace*{3.50cm}$ $=((y\ra x)\ra (x\ra y)^*)^*$. \\
It follows that $s((y\ra x)\ra (x\ra y)^*)^*=s(x\ra (x\ra y)^*)^*+s(y^*\ra (x\ra y)^*)^*$. \\
Obviously, $(x\ra y)^*\le_Q x$ and $y\le_Q x\ra y$ implies $(x\ra y)^*\le_Q y^*\le_Q x^*\ra y^*=y\ra x$. 
Then we get successively: \\ 
$\hspace*{0.50cm}$ $1-s((y\ra x)\ra (x\ra y)^*)=1-s(x\ra (x\ra y)^*)+1-s(y^*\ra (x\ra y)^*)$, \\
$\hspace*{0.50cm}$ $1-(s(y\ra x)^*+s(x\ra y)^*)=1-(s(x^*)+s(x\ra y)^*)+1-(s(y)+s(x\ra y)^*)$, \\
$\hspace*{0.50cm}$ $1-1+s(y\ra x)-1+s(x\ra y)=1-s(x^*)-1+s(x\ra y)+1-s(y)-1+s(x\ra y)$, \\
$\hspace*{0.50cm}$ $s(y\ra x)-s(x\ra y)=1-s(x^*)-s(y)$, \\
$\hspace*{0.50cm}$ $s(x\ra y)-s(y\ra x)=s(y)-s(x)$, \\
that is $(T_5)$. 
\end{proof}

\begin{theorem} \label{sioml-70} Let $X$ be an implicative-orthomodular lattice, and let $s\in \mathcal{S}(X)$. 
Then $(T_5)\Leftrightarrow (T_4)\Leftrightarrow (T_3)$ and $(T_3)\Rightarrow (T_2)\Rightarrow (T_1)$. 
\end{theorem}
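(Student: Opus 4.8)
The plan is to obtain the whole statement by assembling implications that have already been established, since no new computation is required once the heavy lifting in Propositions \ref{sioml-50} and \ref{sioml-60} is available. Throughout I fix an implicative-orthomodular lattice $X$ and a state $s\in\mathcal{S}(X)$, and I treat the five conditions $(T_1)$--$(T_5)$ as properties of $s$ to be connected by a suitable diagram of implications.

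First I would establish the three-way equivalence $(T_5)\Leftrightarrow(T_4)\Leftrightarrow(T_3)$ by closing a cycle. Proposition \ref{sioml-40}, which holds in any implicative involutive BE algebra, already supplies $(T_5)\Rightarrow(T_4)$. Proposition \ref{sioml-50}, proved for implicative-orthomodular lattices, supplies $(T_4)\Rightarrow(T_3)$. Finally Proposition \ref{sioml-60}, again proved for implicative-orthomodular lattices, supplies $(T_3)\Rightarrow(T_5)$. Chaining these yields the cycle $(T_5)\Rightarrow(T_4)\Rightarrow(T_3)\Rightarrow(T_5)$, whence the three conditions are mutually equivalent.

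Next I would derive the remaining chain $(T_3)\Rightarrow(T_2)\Rightarrow(T_1)$. For the first implication, the equivalence just obtained gives $(T_3)\Rightarrow(T_4)$, while Proposition \ref{sioml-40} gives $(T_4)\Rightarrow(T_2)$; composing these yields $(T_3)\Rightarrow(T_2)$. The last implication $(T_2)\Rightarrow(T_1)$ is precisely the one recorded in Proposition \ref{sioml-40}, so no further argument is needed.

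I do not expect a genuine obstacle in this theorem itself: the delicate work — namely the two implications $(T_4)\Rightarrow(T_3)$ and $(T_3)\Rightarrow(T_5)$, which rest on the commutation relations, the behaviour of $\le_Q$, and the Foulis-Holland theorem (Theorem \ref{cioml-30}) — has already been discharged in Propositions \ref{sioml-50} and \ref{sioml-60}. The only point to watch is that these two implications genuinely require $X$ to be an implicative-orthomodular lattice, whereas the implications taken from Proposition \ref{sioml-40} hold more generally; I would therefore be careful to invoke the hypothesis precisely where Propositions \ref{sioml-50} and \ref{sioml-60} are applied, and to record that the full equivalence is consequently not expected to survive on arbitrary implicative involutive BE algebras.
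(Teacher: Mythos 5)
Your proposal is correct and matches the paper's proof, which likewise obtains the theorem by combining Propositions \ref{sioml-40}, \ref{sioml-50} and \ref{sioml-60}: the cycle $(T_5)\Rightarrow(T_4)\Rightarrow(T_3)\Rightarrow(T_5)$ gives the equivalences, and $(T_3)\Rightarrow(T_4)\Rightarrow(T_2)\Rightarrow(T_1)$ gives the remaining chain. You have merely spelled out the assembly that the paper leaves implicit, including the correct observation about where the implicative-orthomodular hypothesis is actually used.
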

\begin{proof}
It follows from Propositions \ref{sioml-40}, \ref{sioml-50}, \ref{sioml-60}. 
\end{proof}

A state $s$ on an implicative involutive BE algebra is called an \emph{$\{0,1\}$-state} if $s(x)\in \{0,1\}$, 
for all $x\in X$. 

\begin{proposition} \label{sioml-80} Let $X$ be an implicative involutive BE algebra, and let $s$ be a $\{0,1\}$-state 
on $X$. Then $s$ is a valuation if and only if it is a Jauch-Piron state. 
\end{proposition}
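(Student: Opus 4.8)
The plan is to treat the two implications separately, observing that the $\{0,1\}$ hypothesis is only needed for one of them. The forward implication, that a valuation is a Jauch-Piron state, holds for an arbitrary state and is already available: by Proposition \ref{sioml-40} we have the chain $(T_5)\Rightarrow (T_4)\Rightarrow (T_2)\Rightarrow (T_1)$, so every valuation satisfies $(T_1)$. Thus the only substantive content is the converse, where $\{0,1\}$-valuedness becomes essential.

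For the converse, suppose $s$ is a $\{0,1\}$-state satisfying $(T_1)$. My plan is to compute $s(x\ra y)$ completely from the values $s(x)$ and $s(y)$, exploiting that each of these lies in $\{0,1\}$. First I would record two monotonicity facts valid for any state. From Lemma \ref{iol-30-20}$(3)$, which gives $a\le_L b\ra a$ for all $a,b$, the instance $y\le_L x\ra y$ together with Lemma \ref{sioml-20}$(3)$ yields $s(y)\le s(x\ra y)$; hence $s(y)=1$ forces $s(x\ra y)=1$. Likewise $x^*\le_L x\ra y$ from Lemma \ref{iol-30-20}$(3)$, combined with $s(x^*)=1-s(x)$ from Lemma \ref{sioml-20}$(2)$ and monotonicity, gives $1-s(x)\le s(x\ra y)$; hence $s(x)=0$ forces $s(x\ra y)=1$. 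The remaining case $s(x)=1$, $s(y)=0$ is precisely the hypothesis $(T_1)$, giving $s(x\ra y)=0$. Since $s$ takes only the values $0$ and $1$, these three cases are exhaustive and determine
\[
s(x\ra y)=\begin{cases} 0, & s(x)=1 \text{ and } s(y)=0,\\ 1, & \text{otherwise.}\end{cases}
\]

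With $s(x\ra y)$ so determined, I would finish by verifying $(T_5)$, that is $s(x\ra y)-s(y\ra x)=s(y)-s(x)$, by running through the four assignments of $(s(x),s(y))\in\{0,1\}^2$; equivalently, one may note the closed form $s(x\ra y)=1-s(x)+s(x)s(y)$, valid for $\{0,1\}$-values, from which $s(x\ra y)-s(y\ra x)=(1-s(x))-(1-s(y))=s(y)-s(x)$ after the symmetric product terms cancel. This shows $s$ is a valuation and completes the equivalence. I do not anticipate a genuine obstacle; the one point to handle with care is that the argument relies on the $\{0,1\}$ hypothesis precisely to guarantee that the three determined cases exhaust $X\times X$. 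For a general state the analogous facts pin down $s(x\ra y)$ only at extreme values of $s(x),s(y)$, which would not be enough to recover $(T_5)$, so the hypothesis cannot be dropped in the nontrivial direction.
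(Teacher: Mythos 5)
Your proposal is correct and follows essentially the same route as the paper: the forward direction via Proposition \ref{sioml-40}, and the converse via a $\{0,1\}$-valued case analysis built on $y\le_L x\ra y$, $x^*\le_L x\ra y$ (Lemma \ref{iol-30-20}$(3)$), monotonicity (Lemma \ref{sioml-20}), and the Jauch-Piron condition for the case $s(x)=1$, $s(y)=0$. Your only variation is organizational --- packaging the case analysis into the closed form $s(x\ra y)=1-s(x)+s(x)s(y)$ before verifying $(T_5)$, rather than checking the four value pairs one by one as the paper does --- which is a tidy but equivalent presentation.
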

\begin{proof}
If $s$ is a valuation, by Proposition \ref{sioml-40}, $(T_5)\Rightarrow (T_1)$, so that $s$ is a Jauch-Piron state. 
Conversely, let $s$ be a Jauch-Piron state, and let $x,y\in X$. We have the following cases: \\
$(1)$ $s(x)=1$, $s(y)=0$. \\
Since $s$ is a Jauch-Piron state, we have $s(x\ra y)=s(y)=0$. 
Moreover, $x\le_L y\ra x$ implies $1=s(x)\le s(y\ra x)$, so that $s(y\ra x)=1$. 
It follows that $s(x\ra y)-s(y\ra x)=s(y)-s(x)=-1$. \\
$(2)$ $s(x)=0$, $s(y)=1$. \\
It follows that $s(y\ra x)=s(x)=0$. Since $y\le_L x\ra y$ implies $1=s(y)\le s(x\ra y)$, we get $s(x\ra y)=1$. 
Hence $s(x\ra y)-s(y\ra x)=s(y)-s(x)=1$. \\
$(3)$ $s(x)=s(y)=1$. \\
Similarly as in $(1)$ and $(2)$, we get $s(x\ra y)=s(y\ra x)=1$. 
Thus $s(x\ra y)-s(y\ra x)=s(y)-s(x)=0$. \\ 
$(4)$ $s(x)=s(y)=0$. \\
From $x^*\le_L x\ra y$, we have $s(x^*)\le s(x\ra y)$, so that $1=1-s(x)=s(x^*)\le s(x\ra y$. 
Hence $s(x\ra y)=1$ and similarly, $y^*\le_L y\ra x$ implies $s(y\ra x)=1$. 
We get $s(x\ra y)-s(y\ra x)=s(y)-s(x)=0$. \\
We conclude that $s$ is a valuation on $X$. 
\end{proof}

\begin{definition} \label{sioml-90} \emph{
Let $X$ be an implicative involutive BE algebra, and let $s\in \mathcal{S}(X)$. 
The set $\Ker(s)=\{x\in X\mid s(x)=1\}$ is called the \emph{kernel} of $s$. 
}
\end{definition}

\begin{proposition} \label{sioml-100} Let $X$ be an implicative involutive BE algebra, and let $s\in \mathcal{S}(X)$. 
The following hold: \\
$(1)$ $\Ker(s)\in \mathcal{DS}_o(X);$ \\
$(2)$ $\Ker(s)\in \mathcal{DS}_q(X)$ iff $s$ is a Jauch-Piron state; \\
$(3)$ $\Ker(s)\in \mathcal{DS}_p(X)$ iff $s$ is a (P)-state. 
\end{proposition}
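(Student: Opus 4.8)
The plan is to translate each membership condition for $\Ker(s)$ into a numerical condition on $s$, using Lemma \ref{sioml-20} --- in particular $s(x^*)=1-s(x)$ and the $\le_L$-monotonicity of $s$ --- together with the identity $y\le_L x\ra y$, which holds for all $x,y$ by Lemma \ref{iol-30-20}$(3)$ (with the roles of $x$ and $y$ reversed). Since an o-DS is a nonempty set satisfying $(F_1)$, a q-DS adds $(F_2)$, and a p-DS is a q-DS satisfying $(F_3)$, I would build $(2)$ and $(3)$ on top of $(1)$.

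For $(1)$ I would note $1\in\Ker(s)$ by $(S_1)$, so $\Ker(s)$ is nonempty, and verify $(F_1)$: if $x\in\Ker(s)$ and $x\le_L y$, then Lemma \ref{sioml-20}$(3)$ gives $1=s(x)\le s(y)\le 1$, whence $y\in\Ker(s)$. For $(2)$, $(F_1)$ is then automatic, so $\Ker(s)\in\mathcal{DS}_q(X)$ amounts to $(F_2)$. I would rewrite $(F_2)$ via $s((x\ra y^*)^*)=1-s(x\ra y^*)$ as ``$s(x)=1$ and $s(y)=1$ imply $s(x\ra y^*)=0$''; replacing $y$ by $y^*$ (a bijection of $X$ with $s(y^*)=1-s(y)$) turns this into ``$s(x)=1$ and $s(y)=0$ imply $s(x\ra y)=0$'', which is precisely the Jauch-Piron condition $(T_1)$. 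Both directions of the iff follow from this reversible substitution.

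For $(3)$ the crux is the identity, valid for all $x,y$ because $y\le_L x\ra y$ and hence by $(S_2)$,
\[
s\bigl((x\ra y)\ra y\bigr)=s\bigl((x\ra y)^*\bigr)+s(y)=1-s(x\ra y)+s(y),
\]
so that $s(x\Cup y)=1$ iff $s(x\ra y)=s(y)$. Thus property $(F_3)$ for $\Ker(s)$, that is $s(x)=1$ implies $s(x\Cup y)=1$ for every $y$, is equivalent to $s(x)=1$ implying $s(x\ra y)=s(y)$, i.e.\ to the $(P)$-state condition $(T_2)$. For the forward implication I would use that a $(P)$-state is Jauch-Piron (Proposition \ref{sioml-40}), so by $(2)$ $\Ker(s)$ is a q-DS, while the displayed identity supplies $(F_3)$; hence $\Ker(s)\in\mathcal{DS}_p(X)$. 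For the converse, membership in $\mathcal{DS}_p(X)$ yields $(F_3)$, and the identity returns the $(P)$-state property.

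The main point to get right --- rather than any genuine obstacle --- is the observation underlying $(3)$: the single identity $s((x\ra y)\ra y)=1-s(x\ra y)+s(y)$, which rests on $y\le_L x\ra y$ holding unconditionally, converts the order-theoretic closure $(F_3)$ into the numerical equality $s(x\ra y)=s(y)$. Once this and the $s(x^*)=1-s(x)$ bookkeeping are in hand, the rest is routine, and one need only recall the implication $(T_2)\Rightarrow(T_1)$ so that the q-DS requirement hidden inside $\mathcal{DS}_p$ is automatic.
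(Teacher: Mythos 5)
Your proof is correct and follows essentially the same route as the paper: part (1) by $\le_L$-monotonicity of $s$, part (2) by translating $(F_2)$ for $\Ker(s)$ into the Jauch-Piron condition via the involution $y\mapsto y^*$, and part (3) by the identity $s(x\Cup y)=1-s(x\ra y)+s(y)$ (from $y\le_L x\ra y$) combined with $(T_2)\Rightarrow(T_1)$ and part (2) to secure the q-DS requirement. The only cosmetic differences are that you compress the two directions of (2) into one reversible substitution where the paper argues them separately, and that you explicitly verify nonemptiness of $\Ker(s)$, which the paper leaves implicit.
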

\begin{proof}
$(1)$ Let $x\in \Ker(s)$ and $y\in X$ such that $x\le_L y$. 
We have: $1=s(x)\le s(y)$, so that $s(y)=1$. It follows that $y\in \Ker(s)$, that is $\Ker(s)$ satisfies $(F_1)$. 
Hence $\Ker(s)\in \mathcal{DS}_o(X)$. \\ 
$(2)$ Suppose $\Ker(s)\in \mathcal{DS}_q(X)$, and let $x,y\in X$ such that $s(x)=1$ and $s(y)=0$. 
Then $s(y^*)=1$, that is $x, y^*\in \Ker(s)$. 
Using $(F_2)$, we get $(x\ra y)^*=(x\ra (y^*)^*)^*\in \Ker(s)$.
It follows that $s(x\ra y)^*=1$, that is $s(x\ra y)=0$. Hence $s$ is a Jauch-Piron state. \\
Conversely, assume that $s$ is a Jauch-Piron state, and let $x,y\in \Ker(s)$, that is $s(x)=s(y)=1$ and $s(y^*)=0$. 
Since $s$ is a Jauch-Piron state, we have $s(x\ra y^*)=0$, so that $s(x\ra y^*)^*=1$. 
It follows that $(x\ra y^*)^*\in \Ker(s)$, that is condition $(F_2)$ is verified. 
Moreover, by $(1)$, $\Ker(s)$ verifies $(F_1)$, hence $\Ker(s)\in \mathcal{DS}_q(X)$. \\
$(3)$ Suppose $\Ker(s)\in \mathcal{DS}_q(X)$, and let $x,y\in X$ such that $s(x)=1$, that is $x\in \Ker(s)$. 
From $x\in \Ker(s)$ and $y\in X$ we get $x\Cup y\in \Ker(s)$, hence $s((x\ra y)\ra y)=1$. 
Since $y\le_L x\ra y$, we get $s(x\ra y)^*+s(y)=1$, so that $s(x\ra y)=s(y)$, thus $s$ is a (P)-state. \\
Conversely, if $s$ is a (P)-state on $X$, then by Proposition \ref{sioml-40}, $s$ is a Jauch-Piron state,  
and applying $(2)$, $\Ker(s)\in \mathcal{DS}_q(X)$. 
Let $x\in \Ker(s)$ and $y\in X$, hence $s(x)=1$. 
Since $s$ is a (P)-state, we have $s(x\ra y)=s(y)$. It follows that 
$s(x\Cup y)=s((x\ra y)\ra y)=s(x\ra y)^*+s(y)=1-s(x\ra y)+s(y)=1$. 
Hence $x\Cup y\in \Ker(s)$, so that condition $(F_3)$ is verified and $\Ker(s)\in \mathcal{DS}_p(X)$.  
\end{proof}
          
\begin{corollary} \label{sioml-110} Let $X$ be an implicative involutive BE algebra, and let $s\in \mathcal{S}(X)$. 
The following hold: \\
$(1)$ if $x,y\in X$ such that $x\le_L y$ and $x\in \Ker(s)$, then $y, x\ra y, y\ra x\in \Ker(s);$ \\    
$(2)$ if $x\in \Ker(s)$ and $y\in X$ such that $y\le_L x$, then $x\Cup y\in \Ker(s);$ \\
$(3)$ if $s$ is a (P)-state, $x\in \Ker(s)$ and $y\in X$, then $x\Cup y\in \Ker(s)$. 
\end{corollary}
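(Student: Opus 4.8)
The plan is to derive all three items from the additivity axiom $(S_2)$ together with the description of $\Ker(s)$ as an order deductive system obtained in Proposition \ref{sioml-100}. The two structural facts that drive the argument are that $\Ker(s)$ satisfies $(F_1)$, and that the relation $y\le_L x\ra y$ always holds (this is Lemma \ref{iol-30-20}$(3)$ read with the roles of $x$ and $y$ interchanged). No deep obstacle is expected; the only point requiring care is to keep track of which instance of $\le_L$ is fed into each application of $(S_2)$, since the hypothesis $y\le_L x$ of item $(2)$ and the universally valid $y\le_L x\ra y$ enter at different stages.

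For item $(1)$, I would first invoke Proposition \ref{sioml-100}$(1)$ to know that $\Ker(s)\in \mathcal{DS}_o(X)$, hence satisfies $(F_1)$. From $x\in \Ker(s)$ and the hypothesis $x\le_L y$, property $(F_1)$ gives $y\in \Ker(s)$ at once. For $y\ra x$, Lemma \ref{iol-30-20}$(3)$ supplies $x\le_L y\ra x$, and a second application of $(F_1)$ (with $x\in \Ker(s)$) yields $y\ra x\in \Ker(s)$; note this half does not even use $x\le_L y$. For $x\ra y$, I would observe that $x\le_L y$ forces $x\le y$ by Proposition \ref{qbe-20}$(6)$, so $x\ra y=1\in \Ker(s)$ by $(S_1)$.

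Item $(2)$ I would prove by a direct two-step computation with $(S_2)$. Since $y\le_L x$ and $s(x)=1$, axiom $(S_2)$ together with Lemma \ref{sioml-20}$(2)$ gives $s(x\ra y)=s(x^*)+s(y)=1-s(x)+s(y)=s(y)$. Next, the valid inequality $y\le_L x\ra y$ lets me apply $(S_2)$ once more to the pair $(x\ra y,\,y)$, obtaining $s(x\Cup y)=s((x\ra y)\ra y)=s((x\ra y)^*)+s(y)=1-s(x\ra y)+s(y)$. Substituting $s(x\ra y)=s(y)$ collapses this to $s(x\Cup y)=1$, i.e. $x\Cup y\in \Ker(s)$.

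Finally, item $(3)$ is essentially immediate from the machinery already in place: if $s$ is a (P)-state, then by Proposition \ref{sioml-100}$(3)$ we have $\Ker(s)\in \mathcal{DS}_p(X)$, and a p-deductive system satisfies $(F_3)$, which is exactly the assertion that $x\in \Ker(s)$ and $y\in X$ imply $x\Cup y\in \Ker(s)$. Alternatively, one may rerun the computation of item $(2)$ dropping the hypothesis $y\le_L x$: the (P)-state property $(T_2)$ directly gives $s(x\ra y)=s(y)$ from $s(x)=1$, and then $s(x\Cup y)=1-s(x\ra y)+s(y)=1$ as before.
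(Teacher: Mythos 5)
Your proposal is correct and takes essentially the same route as the paper: items $(2)$ and $(3)$ are exactly the paper's argument (the two-step application of $(S_2)$ via $y\le_L x$ and $y\le_L x\ra y$, and the appeal to Proposition \ref{sioml-100}$(3)$, respectively). The only variation is in item $(1)$, where for $x\ra y\in \Ker(s)$ the paper applies $(F_1)$ to the already-established $y\in \Ker(s)$ using $y\le_L x\ra y$, while you instead note that $x\le_L y$ forces $x\le y$, hence $x\ra y=1\in \Ker(s)$; both are valid one-line derivations, yours giving the slightly stronger conclusion that $x\ra y$ is in fact $1$.
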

\begin{proof}
$(1)$ Since $\Ker(s)\in \mathcal{DS}_o(X)$, $x\le_L y$ implies $y\in \Ker(s)$. 
Moreover, $x\le_L y\ra x$ and $y\le_L x\ra y$ imply $y\ra x, x\ra y\in \Ker(s)$. \\
$(2)$ From $x\in \Ker(s)$, $y\le_L x$ and $y\le_L x\ra y$, we get 
$s(x\Cup y)=s((x\ra y)\ra y)=s(x\ra y)^*+s(y)=1-s(x\ra y)+s(y)=1-s(x^*)-s(y)+s(y)=s(x)=1$, 
hence $x\Cup y\in \Ker(s)$. \\
$(3)$ By Proposition \ref{sioml-100}$(3)$, $s$ is a (P)-state iff $\Ker(s)\in \mathcal{DS}_p(X)$, 
thus $x\Cup y\in \Ker(s)$.  
\end{proof}

\begin{proposition} \label{sioml-120} Let $X$ be an implicative involutive BE algebra, and let $s\in \mathcal{S}(X)$. 
The following hold: \\
$(1)$ $\Ker(s)$ satisfies property $(P_1);$ \\
$(2)$ if $s$ is a (P)-state, then $\Ker(s)$ satisfies property $(P_2)$. 
\end{proposition}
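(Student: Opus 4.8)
The plan is to observe that both claims are almost verbatim restatements of Corollary \ref{sioml-110}, once one unfolds the definitions of $(P_1)$ and $(P_2)$. First I would note that by Proposition \ref{sioml-100}$(1)$ we have $\Ker(s)\in \mathcal{DS}_o(X)$, so that asking whether $\Ker(s)$ satisfies $(P_1)$ or $(P_2)$ is meaningful. Recall that $(P_1)$ requires that $x\in \Ker(s)$, $y\in X$, $y\le_L x$ imply $x\Cup y\in \Ker(s)$, while $(P_2)$ requires that $x\in \Ker(s)$, $y\in X$ imply $x\Cup y\in \Ker(s)$.

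For part $(1)$, the hypotheses of $(P_1)$ are exactly those of Corollary \ref{sioml-110}$(2)$, whose conclusion is precisely $x\Cup y\in \Ker(s)$; so $(1)$ follows immediately. If a self-contained argument is preferred, I would expand $x\Cup y=(x\ra y)\ra y$ and compute $s(x\Cup y)$ directly: using $y\le_L x\ra y$ (which comes from Lemma \ref{iol-30-20}$(3)$ with $x$ and $y$ interchanged) together with $(S_2)$ gives $s((x\ra y)\ra y)=s((x\ra y)^*)+s(y)$, and then Lemma \ref{sioml-20}$(2)$ turns this into $1-s(x\ra y)+s(y)$. Finally, the remaining hypothesis $y\le_L x$ lets me apply $(S_2)$ once more to rewrite $s(x\ra y)=s(x^*)+s(y)$, so that $s(x\Cup y)=1-s(x^*)=s(x)=1$, i.e. $x\Cup y\in \Ker(s)$.

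For part $(2)$, I would run the same kind of reduction: the hypotheses of $(P_2)$ for $\Ker(s)$, together with the assumption that $s$ is a (P)-state, are exactly the hypotheses of Corollary \ref{sioml-110}$(3)$, whose conclusion is again $x\Cup y\in \Ker(s)$. Alternatively, repeating the computation above up to $s(x\Cup y)=1-s(x\ra y)+s(y)$ and then using the defining property $(T_2)$ of a (P)-state, namely that $s(x)=1$ implies $s(x\ra y)=s(y)$, collapses the right-hand side to $1-s(y)+s(y)=1$, giving $x\Cup y\in \Ker(s)$.

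I do not expect any genuine obstacle here: the content is entirely contained in Corollary \ref{sioml-110}, and the only verification worth isolating is the relation $y\le_L x\ra y$ supplied by Lemma \ref{iol-30-20}$(3)$. The real point of the proposition is conceptual rather than computational, namely to record that the kernel of a state (respectively, of a (P)-state) furnishes a concrete order deductive system witnessing property $(P_1)$ (respectively, $(P_2)$), which is what will make the characterizations of Section 3 applicable to kernels in the sequel.
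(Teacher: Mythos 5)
Your proposal is correct and takes essentially the same approach as the paper: the self-contained computation you describe (using $y\le_L x\ra y$ from Lemma \ref{iol-30-20}$(3)$, then $(S_2)$ and Lemma \ref{sioml-20}$(2)$, with the final step supplied by $y\le_L x$ in part $(1)$ and by $(T_2)$ in part $(2)$) is exactly the paper's proof. Your additional observation that the statement is a verbatim restatement of Corollary \ref{sioml-110}$(2)$,$(3)$ is also accurate; the paper simply repeats the computation rather than citing that corollary.
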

\begin{proof}
$(1)$ By Proposition \ref{sioml-100}$(1)$, $\Ker(s)\in \mathcal{DS}_o(X)$. 
Let $x\in \Ker(s)$, $y\in X$, $y\le_L x$. 
Then $s(x\Cup y)=s((x\ra y)\ra y)=s(x\ra y)^*+s(y)=1-s(x\ra y)+s(y)=1-(s(x^*)+s(y))+s(y)=s(x)=1$, 
that is $x\Cup y\in \Ker(s)$. Hence $\Ker(s)$ satisfies property $(P_1)$. \\
$(2)$ Let $x\in \Ker(s)$ and $y\in X$. Since $s(x)=1$ and $s$ is a (P)-state, we have $s(x\ra y)=s(y)$.   
Hence $s(x\Cup y)=s((x\ra y)\ra y)=s(x\ra y)^*+s(y)=1-s(x\ra y)+s(y)=1$, so that $x\Cup y\in \Ker(s)$. 
It follows that $\Ker(s)$ satisfies property $(P_2)$. 
\end{proof}

$\vspace*{1mm}$

\section{A classification of states on implicative-orthomodular lattices} 

We define the unital, full and rich sets of states on implicative involutive BE algebras and we investigate 
certain properties of these sets of states. 
If an implicative involutive BE algebra $X$ possesses a rich or a full set of states, we prove that $X$ is an 
implicative-orthomodular lattice. 
In the case when $X$ possesses a rich set of (P)-states or a full set of valuations, then $X$ is an implicative-Boolean algebra. 

\begin{definition} \label{csioml-10} \emph{
Let $X$ be an implicative involutive BE algebra, and let $S\subseteq \mathcal{S}(X)$. We say that $S$ is: \\
$(1)$ \emph{unital}, if for any $x\in X$, $x\ne 0$, there exists $s\in S$ such that $s(x)=1;$ \\
$(2)$ \emph{full}, if for any $x,y\in X$, $x\nleqslant_L y$, there exists $s\in S$ such that $s(x)\nleq s(y);$ \\
$(3)$ \emph{rich}, if for any $x,y\in X$, $x\nleqslant_L y$, there exists $s\in S$ such that $s(x)=1$ and $s(y)\ne 1$.  
}
\end{definition}

\begin{remarks} \label{csioml-20} Let $X$ be an implicative involutive BE algebra and let 
$S\subseteq \mathcal{S}(X)$. \\
$(1)$ One can easily check that a rich set $S$ of states is both unital and full. \\
Indeed, let $S$ be a full set of states, and let $x\ne 0$. Then $x\nleqslant_L 0$, so that there exists $s\in S$ 
such that $s(x)=1$ and $s(0)=0\ne 1$, hence $S$ is a unital set of states.  
Obviously, by definition, a rich set $S$ is also a full set of states. \\
$(2)$ If $S$ is full and $s(x)\le s(y)$ for any $s\in S$, then $x\le_L y$. \\ 
Indeed, if $x\nleqslant_L y$, there exists $s\in S$ such that $s(x)\nleqslant_L s(y)$, a contradiction. \\
$(3)$ If $S$ is full and $s(x)=s(y)$ for any $s\in S$, then $x=y$. 
\end{remarks}

\begin{proposition} \label{csioml-30} Let $X$ be an implicative involutive BE algebra. A unital set of Jauch-Piron 
states on $X$ rich. 
\end{proposition}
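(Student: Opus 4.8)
The plan is to unwind the definition of richness into a single separation property, and then produce the separating state by feeding a Sasaki-type element into the unitality hypothesis. Throughout, fix $x,y\in X$ with $x\nleqslant_L y$ and write $m:=(x\ra y^*)^*$.

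First I would reduce what must be found: it suffices to exhibit one $s\in S$ with $s(x)=1$ and $s(m)\neq 1$. Indeed, $s$ is Jauch--Piron, so by Proposition \ref{sioml-100}$(2)$ its kernel $\Ker(s)$ is a q-deductive system; hence if we had $s(x)=s(y)=1$, then $(F_2)$ applied to $x,y\in\Ker(s)$ would force $m=(x\ra y^*)^*\in\Ker(s)$, i.e. $s(m)=1$. Thus $s(x)=1$ together with $s(m)\neq 1$ automatically yields $s(y)\neq 1$, which is exactly what richness demands at $(x,y)$.

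Next I would set up the elements. By the characterization $a\le_L b$ iff $a^*=b\ra a^*$, the hypothesis $x\nleqslant_L y$ says precisely $x\neq m$, whereas $m\le_L x$ holds in every implicative involutive BE algebra: by $(Pimpl)$ we have $x\ra m^*=x\ra(x\ra y^*)=x\ra y^*=m^*$, which is the defining identity of $m\le_L x$. Put $c:=(x\ra m)^*$, so $c^*=x\ra m$. Then $x\ra c^*=x\ra(x\ra m)=x\ra m=c^*$ by $(Pimpl)$, i.e. $c\le_L x$; and $m\le_L x\ra m=c^*$ by Lemma \ref{iol-30-20}$(3)$, whence $c\le_L m^*$ by Lemma \ref{iol-30-20}$(1)$. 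Granting $c\neq 0$, unitality of $S$ supplies $s\in S$ with $s(c)=1$; then $c\le_L x$ and Lemma \ref{sioml-20}$(3)$ give $s(x)=1$, while $c\le_L m^*$ gives $s(m^*)=1$, hence $s(m)=0$ by Lemma \ref{sioml-20}$(2)$. Combined with the reduction above, this $s$ witnesses richness.

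The hard part will be proving that $c\neq 0$, i.e. that $x\nleqslant_L y$ forces $x\ra m\neq 1$. Since $x\ra m=1$ means $x\le m$ in the BE order and $x\le_L y$ is equivalent to $x=m$, the required implication is $x\le m\Rightarrow x=m$. As $m\le_L x$ (hence $m\le x$) always holds, the assumption $x\le m$ makes $x$ and $m$ equivalent in the BE order, and one must show that for this particular $m=(x\ra y^*)^*$ such equivalence collapses to equality. This is precisely the orthomodular content of the statement: in an orthomodular lattice $c$ is the Sasaki projection $x\wedge(x\wedge y)^{'}$, which vanishes iff $x\le y$ exactly by the orthomodular law. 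In the merely implicative setting I would try to extract $x=m$ from $(Impl)$, $(iG)$ and $(Pimpl)$ (for instance from $m^*\ra x=x$, which follows from $(Impl)$, together with $x\ra m^*=m^*$ from $(Pimpl)$); should this identity genuinely require orthomodularity, the correct reading is that the existence of a unital set of Jauch--Piron states already enforces it, consistently with richness implying that $X$ is an implicative-orthomodular lattice.
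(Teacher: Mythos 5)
Everything you actually prove is correct, and it coincides, element for element, with the paper's own argument: the paper too feeds the witness $c=(x\ra (x\ra y^*)^*)^*$ into unitality, extracts $s(x)=1$ and $s((x\ra y^*)^*)=0$ (it does this via the state equation $(S_2)$ along $(x\ra y^*)^*\le_L x$, where you use monotonicity, Lemma \ref{sioml-20}(3); both work), and then rules out $s(y)=1$ by the Jauch--Piron hypothesis (the paper applies $(T_1)$ directly to the pair $(x,y^*)$, while you route through $\Ker(s)\in\mathcal{DS}_q(X)$ via Proposition \ref{sioml-100}(2) --- an equivalent use of the same hypothesis). The one step you leave open, $c\ne 0$, is exactly the step the paper does not justify either: from $(x\ra y^*)^*\lneq_L x$ its proof simply asserts ``hence $x\ra (x\ra y^*)^*\ne 1$''. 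So your ``hard part'' is not a defect of your write-up relative to the paper; it is the paper's gap as well.

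And it is a genuine, unfixable gap at the stated level of generality, for precisely the reason you suspect: the implication ``$(x\ra y^*)^*\le_L x$ and $x\ra (x\ra y^*)^*=1$ imply $x=(x\ra y^*)^*$'' is the orthomodular law, and it does not follow from $(Impl)$, $(iG)$, $(Pimpl)$. Take the benzene ring $O_6$, the ortholattice $\{0,a,b,a',b',1\}$ with $0<a<b<1$ and $0<b'<a'<1$, made into an implicative involutive BE algebra by $x\ra y:=(x\wedge y')'$ (every ortholattice satisfies $(BE_1)$--$(BE_4)$, involutivity and $(Impl)$ under this operation, and $\le_L$ becomes the lattice order). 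For $x=b$, $y=a$ we have $b\nleqslant_L a$, yet $c=(b\ra a)^*=b\wedge a'=0$. Worse, your fallback reading --- that a unital set of Jauch--Piron states would itself enforce the missing identity --- also fails there, and with it the proposition as stated: applying $(S_2)$ to $a\le_L b$ gives $1=s(b\ra a)=s(b^*)+s(a)$, so every state on $O_6$ satisfies $s(a)=s(b)$; hence the two $\{0,1\}$-valued states $s_1$ (equal to $1$ exactly on $\{a,b,1\}$) and $s_0$ (equal to $1$ exactly on $\{a',b',1\}$) form a unital set of Jauch--Piron states (if $s_1(u)=1$ and $s_1(v)=0$, then $u$ and $v'$ lie in the chain $\{a,b,1\}$, so $u\ra v=(u\wedge v')'$ takes $s_1$-value $0$; symmetrically for $s_0$), while no set of states on $O_6$ can be rich, since richness at the pair $(b,a)$ would require $s(b)=1\ne s(a)$. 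The statement, your argument, and the paper's proof all become correct once $X$ is assumed to be an implicative-orthomodular lattice: then Theorem \ref{dioml-05-50} and $(x\ra y^*)^*\le_L x$ give $x=x\Cup (x\ra y^*)^*=(x\ra (x\ra y^*)^*)\ra (x\ra y^*)^*$, so $x\ra (x\ra y^*)^*=1$ would force $x=(x\ra y^*)^*$, contradicting $x\nleqslant_L y$.
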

\begin{proof}
Let $S$ be a unital set of Jauch-Piron states on $X$, and let $x,y\in X$ such that $x\nleqslant_L y$, that is 
$x\ne (x\ra y^*)^*$. 
Since $(x\ra y^*)^*\le_L x$ and $x\ne (x\ra y^*)^*$, it follows that $(x\ra y^*)^*\lneq_L x$. 
Hence $x\ra (x\ra y^*)^*\ne 1$, so that $(x\ra (x\ra y^*)^*)^*\ne 0$. 
Since $S$ is unital, there exists $s\in S$ such that $s(x\ra (x\ra y^*)^*)^*=1$, hence $s(x\ra (x\ra y^*)^*)=0$. 
It follows that $s(x^*)+s(x\ra y^*)^*=0$, that is $s(x^*)=s(x\ra y^*)^*=0$. 
Thus $s(x)=1$ and $s(x\ra y^*)=1$. 
We have $s(y^*)\ne 0$, otherwise, since $s$ is a Jauch-Piron state, $s(x\ra y^*)=0$. Hence $s(y)\ne 1$. 
We conclude that $S$ is a rich set of states. 
\end{proof}

\begin{proposition} \label{csioml-40} Let $X$ be an implicative involutive BE algebra. \\
$(1)$ if $X$ possesses a rich set of states, then $X$ is an implicative-orthomodular lattice; \\
$(2)$ if $X$ possesses a rich set of (P)-states, then $X$ is an implicative-Boolean algebra. 
\end{proposition}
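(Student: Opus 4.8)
The plan is to reduce both statements to the characterizations of implicative-orthomodular lattices and implicative-Boolean algebras supplied by Theorem \ref{dsioml-50}, using the kernel of a suitable state as the order deductive system demanded there. The bridge in each case is provided by Propositions \ref{sioml-100} and \ref{sioml-120}, which show that kernels of states are order deductive systems with the right extra property.

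For part $(1)$, I would fix an arbitrary pair $x,y\in X$ with $y\nleqslant_L x$ and feed it into the richness hypothesis. Reading Definition \ref{csioml-10}$(3)$ with the two variables matched to the hypothesis of Theorem \ref{dsioml-50}$(1)$, richness of $S$ yields a state $s\in S$ with $s(y)=1$ and $s(x)\ne 1$. I would then set $F:=\Ker(s)$. By Proposition \ref{sioml-100}$(1)$ we have $F\in \mathcal{DS}_o(X)$, and by Proposition \ref{sioml-120}$(1)$ the set $F$ satisfies property $(P_1)$. Moreover $y\in F$ because $s(y)=1$, while $x\notin F$ because $s(x)\ne 1$. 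Thus $F$ is exactly the order deductive system required by Theorem \ref{dsioml-50}$(1)$; since $x,y$ were arbitrary with $y\nleqslant_L x$, that theorem yields that $X$ is an implicative-orthomodular lattice.

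Part $(2)$ runs along identical lines, now invoking Theorem \ref{dsioml-50}$(2)$. Given $x,y\in X$ with $y\nleqslant_L x$, richness of the set of (P)-states produces a (P)-state $s$ with $s(y)=1$ and $s(x)\ne 1$, and I again take $F:=\Ker(s)$. The only modification relative to part $(1)$ is that property $(P_2)$ must be verified in place of $(P_1)$: this is exactly Proposition \ref{sioml-120}$(2)$, which applies precisely because $s$ is a (P)-state. Combined with $y\in F$ and $x\notin F$, this realizes the hypothesis of Theorem \ref{dsioml-50}$(2)$, whence $X$ is an implicative-Boolean algebra.

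The argument is essentially a bookkeeping reduction, so I do not anticipate a deep obstacle; the one point demanding care is the direction of the relation in the richness condition. The definition of a rich set guarantees, for $a\nleqslant_L b$, a state separating $a$ (value $1$) from $b$ (value $\ne 1$), whereas Theorem \ref{dsioml-50} is phrased for $y\nleqslant_L x$ and asks for $y\in F$, $x\notin F$. Matching $a:=y$ and $b:=x$ aligns the two statements, and the definition $\Ker(s)=\{z\in X\mid s(z)=1\}$ converts the value conditions $s(y)=1$, $s(x)\ne 1$ directly into the membership conditions $y\in F$, $x\notin F$.
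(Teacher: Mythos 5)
Your proposal is correct and follows essentially the same route as the paper's own proof: both reduce the claim to Theorem \ref{dsioml-50} by taking $F:=\Ker(s)$ for a state $s$ provided by richness, and both invoke Propositions \ref{sioml-100} and \ref{sioml-120} to verify that the kernel is an order deductive system satisfying $(P_1)$ (resp.\ $(P_2)$). The only cosmetic difference is that in part $(2)$ the paper cites Proposition \ref{sioml-100}$(3)$ to place $\Ker(s)$ in $\mathcal{DS}_p(X)\subseteq \mathcal{DS}_o(X)$, while you use Proposition \ref{sioml-100}$(1)$ directly, which is equally valid.
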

\begin{proof}
$(1)$ Let $S\subseteq \mathcal{S}(X)$ be a rich set of states on $X$. 
By Proposition \ref{sioml-100}$(1)$, $\Ker(s)\in \mathcal{DS}_o(X)$ for any $s\in S$, and by 
Proposition \ref{sioml-120}$(1)$, $\Ker(s)$ satisfies property $(P_1)$. 
Let $x,y\in X$, $y\nleqslant_L x$. Since $S$ is rich, there exists $s\in S$ such that $s(y)=1$, $s(x)\ne 1$, 
so that $y\in \Ker(s)$, $x\notin \Ker(s)$. By Theorem \ref{dsioml-50}$(1)$, $X$ is an IOML. \\
$(2)$ Let $S\subseteq \mathcal{S}(X)$ be a rich set of (P)-states on $X$. 
By Proposition \ref{sioml-100}$(3)$, $\Ker(s)\in \mathcal{DS}_p(X)$, so that $\Ker(s)\in \mathcal{DS}_o(X)$.  
Let $x,y\in X$, $y\nleqslant_L x$. Since $S$ is rich, there exists $s\in S$ such that $s(y)=1$, $s(x)\ne 1$, 
so that $y\in \Ker(s)$, $x\notin \Ker(s)$. 
By Proposition \ref{sioml-120}$(2)$, $\Ker(s)$ satisfies property $(P_2)$.
It follows that $\Ker(s)$ satisfies conditions from Theorem \ref{dsioml-50}$(2)$, hence $X$ is an 
implicative-Boolean algebra.
\end{proof}

\begin{proposition} \label{csioml-50} Let $X$ be an implicative involutive BE algebra. \\
$(1)$ if $X$ possesses a full set of states, then $X$ is an implicative-orthomodular lattice; \\
$(2)$ if $X$ possesses a full set of valuations, then $X$ is an implicative-Boolean algebra. 
\end{proposition}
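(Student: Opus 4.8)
The plan is to prove both parts by a direct computation on states, rather than through the deductive-system characterization that was used for the rich case in Proposition~\ref{csioml-40}. The key observation is that fullness is exactly tailored to Remark~\ref{csioml-20}(3): to establish an identity $a=b$ it suffices to show $s(a)=s(b)$ for every $s$ in the full set $S$. Richness matches the separating form of Theorem~\ref{dsioml-50} directly (a kernel that contains $y$ but not $x$), whereas fullness only controls the comparison $s(x)\le s(y)$; so I would not try to route through Theorem~\ref{dsioml-50} and would instead evaluate both sides of the target identities on an arbitrary element of $S$.

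For part (1) I would invoke Theorem~\ref{dioml-05-50}(c): it suffices to show that $x\le_L y$ implies $y=y\Cup x$. Fix such $x,y$ and an arbitrary $s\in S$. Since $y\Cup x=(y\ra x)\ra x$ and $x\le_L y\ra x$ by Lemma~\ref{iol-30-20}(3), axiom $(S_2)$ together with Lemma~\ref{sioml-20}(2) gives $s(y\Cup x)=s((y\ra x)^*)+s(x)=1-s(y\ra x)+s(x)$. Applying $(S_2)$ to $x\le_L y$ yields $s(y\ra x)=s(y^*)+s(x)=1-s(y)+s(x)$, and substituting makes the terms collapse to $s(y\Cup x)=s(y)$. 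As this holds for every $s\in S$, Remark~\ref{csioml-20}(3) forces $y\Cup x=y$, whence $X$ is an IOML by Theorem~\ref{dioml-05-50}.

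For part (2), a full set of valuations is in particular a full set of states, so part (1) already shows $X$ is an IOML; it remains to prove $x\Cup y=y\Cup x$ for all $x,y$. For an arbitrary valuation $s\in S$ I would compute both $\Cup$-terms exactly as above: using $y\le_L x\ra y$ and $x\le_L y\ra x$ from Lemma~\ref{iol-30-20}(3) and applying $(S_2)$ one gets $s(x\Cup y)=1-s(x\ra y)+s(y)$ and $s(y\Cup x)=1-s(y\ra x)+s(x)$. Subtracting, $s(x\Cup y)-s(y\Cup x)=\big(s(y\ra x)-s(x\ra y)\big)+\big(s(y)-s(x)\big)$, and the valuation identity $(T_5)$, $s(x\ra y)-s(y\ra x)=s(y)-s(x)$, makes this vanish. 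Hence $s(x\Cup y)=s(y\Cup x)$ for every $s\in S$, and fullness (Remark~\ref{csioml-20}(3)) gives $x\Cup y=y\Cup x$, i.e.\ $X$ is an implicative-Boolean algebra.

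I do not expect a genuine obstacle; the only care needed is bookkeeping with $(S_2)$, whose hypothesis $B\le_L A$ must be verified at each application, and all such checks reduce to the two $\le_L$-facts of Lemma~\ref{iol-30-20}(3) (one of them read with the variables swapped to obtain $y\le_L x\ra y$). The mild subtlety worth flagging is that, unlike in the rich case, fullness does not hand us a kernel separating $y$ from $x$, so the deductive-system machinery of Theorem~\ref{dsioml-50} is not directly usable; evaluating the states directly circumvents this and is in fact the shorter route.
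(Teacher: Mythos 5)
Your proof is correct, and it succeeds with the same overall strategy as the paper: evaluate the relevant terms on an arbitrary state of the full set, then conclude the algebraic identity from Remark~\ref{csioml-20}(3) and Theorem~\ref{dioml-05-50}. Part (1) is essentially identical to the paper's argument — the same two applications of $(S_2)$ (one via $x\le_L y\ra x$, one via the hypothesis $x\le_L y$) yielding $s(y\Cup x)=s(y)$; your bookkeeping is in fact the correct one, as the paper's displayed computation contains a typo ($1-s(y\ra x)+s(y)$ should read $1-s(y\ra x)+s(x)$). Part (2) takes a mildly different route: the paper obtains $s(x\Cup y)=s(y\Cup x)$ by applying the valuation identity to the pair $(x\Cup y,\,y\Cup x)$ together with Proposition~\ref{ioml-50}(8), $(x\Cup y)\ra(y\Cup x)=(y\Cup x)\ra(x\Cup y)=1$ — a fact valid only in an IOML, so the paper implicitly relies on part (1) at that point without saying so. You instead expand $s(x\Cup y)$ and $s(y\Cup x)$ by $(S_2)$ and apply the valuation identity to the pair $(x,y)$, which avoids Proposition~\ref{ioml-50}(8) altogether; in your version part (1) is needed only because the definition of an implicative-Boolean algebra presupposes the IOML structure, and you correctly make that dependence explicit. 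Both routes finish with the same appeal to fullness; yours is marginally more self-contained, the paper's marginally shorter.
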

\begin{proof}
$(1)$ Let $S\subseteq \mathcal{S}(X)$ be a full set of states on $X$, let $s\in S$, and let $x\le_L y$. 
Since $x\le_L y\ra x$, we have: \\ 
$\hspace*{2.00cm}$ $s(y\Cup x)=s((y\ra x)\ra x)=s(y\ra x)^*+s(x)=1-s(y\ra x)+s(y)$ \\
$\hspace*{3.50cm}$ $=1-s(y^*)-s(x)+s(x)=s(y)$. \\
Hence $s(y\Cup x)=s(y)$ for any $x\le_L y$ and all $s\in S$. 
Since $S$ is full, then by Remarks \ref{csioml-20}$(3)$, we get $y\Cup x=y$ for any $x\le_L y$. 
Hence, by Theorem \ref{dioml-05-50}, $X$ is an IOML. \\
$(2)$ Let $S\subseteq \mathcal{S}(X)$ be a full set of valuations on $X$, let $s\in S$, and let $x,y\in X$. 
Since by Proposition \ref{ioml-50}$(8)$, $(x\Cup y)\ra (y\Cup x)=(y\Cup x)\ra (x\Cup y)=1$, we have 
$0=s((x\Cup y)\ra (y\Cup x))-s((y\Cup x)\ra (x\Cup y))=s(y\Cup x)-s(x\Cup y)$,  
hence $s(x\Cup y)=s(y\Cup x)$ for all $s\in S$. 
Since $S$ is full, by Remarks \ref{csioml-20}$(3)$, it follows that $x\Cup y=y\Cup x$ for all $x,y\in X$, 
thus $X$ is an implicative-Boolean algebra. 
\end{proof}


$\vspace*{1mm}$


\begin{thebibliography}{99}


\bibitem{Beran} L. Beran, \emph{Orthomodular Lattices: Algebraic Approach. Mathematics and its Applications}, 
Springer, Netherland, 1985.

\bibitem{Birk1} G. Birkhoff, J. von Neumann, \emph{The logic of quantum mechanics}, Ann. Math. {\bf 37}(1936), 823--834. 

\bibitem{Birk2} G. Birkhoff, \emph{Lattice theory}, Amer. Math. Soc. Colloq. Publ., Amer. Math. Soc., 1961. 



\bibitem{Ciu83} L.C. Ciungu, \emph{Implicative-orthomodular lattices}, arxiv.org:2401.12845. 

\bibitem{Ciu84} L.C. Ciungu, \emph{Foulis-Holland theorem for implicative-orthomodular lattices}, arxiv.org:2403.16762. 

\bibitem{DvPu} A. Dvure\v censkij, S. Pulmannov\'a, \emph{New trends in Quantum Structures}, Kluwer Academic Publishers, Dordrecht, Ister Science, Bratislava, 2000.





\bibitem{Husimi} K. Husimi, \emph{Studies on the foundation of quantum mechanics. I.}, Proc. Phys. Math. Soc. Jpn. 
{\bf 19}(1937), 766--789.

\bibitem{Ior32} A. Iorgulescu, \emph{On quantum-MV algebras - Part II: Orthomodular lattices, softlattices and widelattices}, Trans. Fuzzy Sets Syst. {\bf 1}(2022), 1--41.

\bibitem{Ior35} A. Iorgulescu, \emph{BCK algebras versus m-BCK algebras}. Foundations, Studies in Logic, 
Vol. 96, 2022. 

\bibitem{Kim1} H.S. Kim, Y.H. Kim, \emph{On BE-algebras}, Sci. Math. Jpn. {\bf 66}(2007), 113--116.

\bibitem{PadRud} R. Padmanabhan, S. Rudeanu, \emph{Axioms for lattices and Boolean algebras}, World Scientific, Singapore, 2008.


\bibitem{Pulm1} S. Pulmannov\'a, \emph{A remark on states on orthomodular lattices}, Tatra Mountains Math. Publ. 
{\bf 2}(1993), 209--219.

\bibitem{Varad1} V.S. Varadarajan, \emph{Geometry of Quantum Theory}, Vol. 1, D. Van Nostrand, Princeton, New Jersey, 1968.

\end{thebibliography}
\end{document}